\definecolor{darkblue}{rgb}{0.0,0,0.7}
\newcommand{\red}{\color{red}}
\newcommand{\newword}[1]{\textcolor{darkblue}{\textbf{\emph{#1}}}}
\theoremstyle{plain}
\theoremstyle{definition}
\newtheorem{theorem}{Theorem}[section]
\newtheorem{conjecture}[theorem]{Conjecture}
\newtheorem{lemma}[theorem]{Lemma}
\newtheorem{definition}[theorem]{Definition}
\newtheorem{proposition}[theorem]{Proposition}
\DeclareMathAlphabet{\mathpzc}{OT1}{pzc}{m}{it}
\newcommand{\Z}{\mathbb{Z}}
\newcommand{\id}{\mathrm{id}}
\newcommand{\xdownarrow}[1]{%
  {\left\downarrow\vbox to #1{}\right.\kern-\nulldelimiterspace}
}
\begin{document}

\title{Path-cordial abelian groups}
\date{\today}

\author{Rebecca Patrias}
\address[RP]{\parbox{\linewidth}{Department of Mathematics, University of St.\ Thomas, St.\ Paul, MN 55105}}
\email{\parbox[t]{\linewidth}{patr0028@stthomas.edu}}
\author{Oliver Pechenik}
\address[OP]{\parbox{\linewidth}{Department of Mathematics, University of Michigan, Ann Arbor, MI 48109}}
\email{\parbox[t]{\linewidth}{pechenik@umich.edu}}

\keywords{cordial graph, cordial group, path}

\subjclass[2010]{05C78, 05C05, 20K01, 05C25}

\begin{abstract}
A labeling of the vertices of a graph by elements of any abelian group $A$ induces a labeling of the edges by summing the labels of their endpoints. Hovey defined the graph $G$ to be \emph{$A$-cordial} if it has such a labeling where the vertex labels and the edge labels are both evenly-distributed over $A$ in a technical sense. His conjecture that all trees $T$ are $A$-cordial for all cyclic groups $A$ remains wide open, despite significant attention. Curiously, there has been very little study of whether Hovey's conjecture might extend beyond the class of cyclic groups.

We initiate this study by analyzing the larger class of finite abelian groups $A$ such that all path graphs are $A$-cordial. We conjecture a complete characterization of such groups, and establish this conjecture for various infinite families of groups as well as for all groups of small order.
\end{abstract}

\maketitle
\section{Introduction}\label{sec:intro}
Let $A$ be a finite abelian group of order $n$. A labeling of the vertices of any graph $G$ by elements of $A$ induces a labeling of the edges of $G$ by associating to each edge the sum of the labels on its endpoints. Hence, every such vertex labeling gives rise to two integer partitions, each with at most $n$ parts: $\lambda^v \coloneqq (\lambda^v_1 \geq \dots \geq \lambda^v_n \geq 0)$ recording the number of vertices with each label and $\lambda^e \coloneqq (\lambda^e_1 \geq \dots\geq \lambda^e_n \geq 0)$ recording the number of edges with each label (both arranged in decreasing order). We say a partition $\lambda = (\lambda_1 \geq \lambda_2 \geq \cdots)$ is \newword{almost rectangular} if, for all $i$, we have $\lambda_i \in \{\lambda_1, \lambda_1-1, 0\}$. 

In early work, Mark Hovey \cite{Hovey} introduced the following notion.

\begin{definition}
A graph $G$ is \newword{$A$-cordial} if there is a vertex labeling of $G$ such that both partitions $\lambda^v, \lambda^e$ are almost rectangular. 
\end{definition}

Hovey's definition subsumes other famous notions in graph labeling. For example, the graph $G$ is $\Z_2$-cordial if and only if it is \emph{cordial} in the sense of I.~Cahit \cite{Cahit}, while $G$ is $\Z_{|E(\Gamma)|}$-cordial if and only if it is \emph{harmonious} in the sense of R.L.~Graham and N.J.A.~Sloane \cite{Graham.Sloane}. (Here $\Z_k$ denotes the cyclic group of order $k$, written additively.) For an extensive survey of graph labeling, see \cite{Gallian}.

Most work to date has taken the form of fixing a group $A$ (nearly always cyclic) and asking which graphs $G$ are $A$-cordial. Results along these lines, while numerous, are mostly piecemeal and having {\it ad hoc} proofs. Here, we consider a dual problem. 

\begin{definition}
Let $\mathbb{G}$ be a family of graphs. We say a group $A$ is \newword{$\mathbb{G}$-cordial} if every $G \in \mathbb{G}$ is $A$-cordial. We say $A$ is \newword{weakly $\mathbb{G}$-cordial} if all but finitely many $G \in \mathbb{G}$ are $A$-cordial.
\end{definition}

A major open problem is Hovey's conjecture \cite[Conjecture~1]{Hovey} that all trees are $A$-cordial for all cyclic groups $A$. In other language, we have the following.

\begin{conjecture}[\cite{Hovey}]\label{conj:tree}
	Let $\mathbb{T}$ be the class of trees. Then $\mathbb{Z}_k$ is $\mathbb{T}$-cordial for all $k$.
\end{conjecture}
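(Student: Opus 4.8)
The plan, which for general trees remains incomplete --- Conjecture~\ref{conj:tree} is the long-standing open problem of the subject --- would be an induction on $|V(T)|$ carried out by peeling off leaves. Removing a leaf $\ell$ adjacent to a vertex $v$ from $T$ yields a smaller tree $T'$, and one would like to recover a cordial labeling of $T$ from one of $T'$ by choosing the label of $\ell$. The obstruction to doing this directly is that $T$ has one more vertex and one more edge than $T'$, so the target almost-rectangular shapes for $\lambda^v$ and $\lambda^e$ differ for the two trees, and a labeling optimal for $T'$ is typically not one leaf away from being optimal for $T$. The fix is to strengthen the inductive hypothesis: rather than asserting merely that $T'$ is $\Z_k$-cordial, one asserts that $T'$ admits a controlled family of labelings whose \emph{defect vectors} --- the deviations of $\lambda^v$ and $\lambda^e$ from exact rectangularity --- range over a prescribed set, and in which the label at the attaching vertex $v$ is a free parameter. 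One wants a statement of this shape because the desired almost-rectangular shapes always exist as integer partitions (for any $m$ and $k$, write $m = qk+r$ with $0 \le r < k$ and take $r$ parts equal to $q+1$ and $k-r$ parts equal to $q$), so there is no arithmetic obstruction to the conjecture --- only a constructive one, which is what makes an inductive construction the natural route.

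Concretely the steps would be: (1) dispose of paths using the explicit periodic vertex labelings that underlie the study of path-cordial groups; (2) promote this to caterpillars, labeling the spine as a path and using each pendant leaf as an adjustment knob to nudge the defect vectors into the allowed range; (3) for a general rooted tree, take a longest root-to-leaf path as a spine, write $T$ as this spine together with a forest of subtrees hanging off spine vertices, apply the strengthened hypothesis to each hanging subtree with its root label left free, and then choose the spine labels and the free subtree-root labels to splice everything together; (4) verify that, across all branches, the combined vertex- and edge-defect can always be driven to an almost-rectangular pair. Steps (1) and (2) are essentially available, and step (3) is bookkeeping; the content is in step (4).

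The crux --- and the reason the conjecture has resisted proof --- is step~(4), and within it the control of $\lambda^e$. Vertex labels are local: changing the label of one leaf changes exactly one coordinate of $\lambda^v$. Edge labels are not local: each is a sum of two vertex labels, so attaching a subtree rooted at $r$ to a spine vertex carrying label $a$ shifts \emph{every} edge of the subtree incident to $r$ at once, and, more importantly, the subtree's own internal edge-label partition becomes coupled to the choice of $a$. Coordinating all branches so that the global vertex partition is almost rectangular while simultaneously every branch's edge contribution fits into an almost-rectangular global edge partition is exactly the interaction that no inductive bookkeeping has so far managed to tame; it is open even to characterize which pairs of defect vectors are simultaneously realizable on the stars $K_{1,m}$ for all $m$ and all $k$, which is essentially the base case on which the whole scheme rests.
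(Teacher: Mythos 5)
The statement you were asked about is Conjecture~\ref{conj:tree}, Hovey's conjecture, which is explicitly presented in the paper as a major \emph{open} problem: no proof exists in the paper or, to date, anywhere else, and only the cases $\Z_2$ through $\Z_7$ are known. Your submission is accordingly not a proof but a research-program sketch, and you are candid about this (``the plan \ldots remains incomplete''). So the assessment is straightforward: there is a genuine gap, and it is the entire mathematical content of the claim. Concretely, everything is carried by your step~(4) --- showing that the vertex- and edge-label defects of all the hanging subtrees can be simultaneously driven into an almost-rectangular pair --- and that step is asserted as ``the crux'' rather than argued. An induction whose inductive step is conceded to be unestablished proves nothing, however accurately it diagnoses why the edge partition $\lambda^e$ is the hard part.

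Two smaller points of calibration against what the paper actually contains. First, your step~(1) is genuinely available: Hovey proved all cyclic groups are $\mathbb{P}$-cordial, and the paper's Theorem~\ref{thm:singlecheck} shows that a single $A$-cordial path $P_{|A|}$ suffices to get all paths; so the base of your scheme is sound. Second, your strengthened inductive hypothesis (labelings with prescribed defect vectors and a free label at the attaching vertex) is a reasonable formalization of how the known small cases ($\Z_6$, $\Z_7$) are actually proved in the literature, but those arguments succeed only by case analysis specific to the modulus; no uniform version of your step~(4) is known, and you correctly note that even the star case $K_{1,m}$ is not fully understood. If you want to contribute something provable in the spirit of this paper, the productive direction is the one the paper takes: restrict from $\mathbb{T}$ to $\mathbb{P}$ and broaden from cyclic groups to general finite abelian groups, where Theorem~\ref{thm:singlecheck} reduces $\mathbb{P}$-cordiality to a single finite check.
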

Currently, the only nontrivial finite groups known to be $\mathbb{T}$-cordial are the small cyclic groups $\mathbb{Z}_2$ \cite{Cahit}, $\Z_3$ \cite{Hovey}, $\Z_4$ \cite{Hovey}, $\Z_5$ \cite{Hovey}, $\mathbb{Z}_6$ \cite{Driscoll.Krop.Nguyen}, and $\Z_7$ \cite{Driscoll}. We wish to gain insight into Conjecture~\ref{conj:tree} by considering the class of $\mathbb{T}$-cordial groups more broadly. Certainly, not all abelian groups are $\mathbb{T}$-cordial, as it is easy to check that the four-vertex path $P_4$ is not $\Z_2 \times \Z_2$-cordial. However, it is currently unknown whether any finite noncyclic group is $\mathbb{T}$-cordial; indeed, there are not even any conjectures in this direction. The results of this paper, however, may be taken as evidence that many finite noncyclic groups are also $\mathbb{T}$-cordial. Hence, the appropriate level of generality for studying Conjecture~\ref{conj:tree} may be broader than the class of cyclic groups.

It seems reasonable to start this program by considering the broader family of $\mathbb{P}$-cordial groups, where $\mathbb{P}$ denotes the class of \emph{path graphs} (i.e., trees with no vertex of degree $>2$). Hovey \cite[Theorem~2]{Hovey} showed that all cyclic groups are $\mathbb{P}$-cordial, whereas $\Z_2 \times \Z_2$ is not. To our knowledge, no other results about $\mathbb{P}$-cordiality have been established to date. We make the following conjecture.

\begin{conjecture}\label{conj:path}
	 A finite abelian group $A$ is $\mathbb{P}$-cordial if and only if it is not a nontrivial product of copies of $\Z_2$ (equivalently, if and only if there exists $a \in A$ with $|a|>2$).
\end{conjecture}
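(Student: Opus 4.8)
We will use one normalization repeatedly: replacing every vertex label by its translate under a fixed $c\in A$ replaces every edge label by its translate under $2c$, and since both operations are bijections of $A$ this changes neither $\lambda^v$ nor $\lambda^e$. Thus, when testing whether a path is $A$-cordial, we may prescribe the label of any one of its vertices. The conjecture splits into two halves. \emph{Necessity:} if $A\cong\Z_2^k$ with $k\ge 2$, some path is not $A$-cordial. \emph{Sufficiency:} if $A$ has an element of order $>2$, every path is $A$-cordial. These suffice, since any finite abelian group with no element of order $>2$ is some $\Z_2^k$, and for $k\le 1$ the group is $\Z_2$ or trivial, which are $\mathbb P$-cordial by Hovey's theorem for cyclic groups.

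\textbf{Necessity.} We show $P_{2^k}$ is not $\Z_2^k$-cordial. Since $P_{2^k}$ has $2^k=|A|$ vertices, in any $A$-cordial labeling each element of $A$ occurs exactly once as a vertex label; in particular consecutive vertices have different labels, so every edge label is nonzero, and hence (again by the evenness of $\lambda^e$) each nonzero element of $A$ occurs exactly once as an edge label. Summing the edge labels gives $\sum_{a\in A\setminus\{0\}}a=\sum_{a\in A}a=0$, where the last equality uses $k\ge 2$. On the other hand $\sum_{i=1}^{2^k-1}(x_i+x_{i+1})$ telescopes to $x_1+x_{2^k}$ because $2x=0$ in $\Z_2^k$. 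Therefore $x_1=x_{2^k}$, contradicting that all vertex labels are distinct. Since the groups $\Z_2^k$ with $k\ge 2$ are precisely the nontrivial products of copies of $\Z_2$, this is the full ``only if'' direction.

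\textbf{Sufficiency.} Write $m=|A|$. The engine is a splicing lemma: \emph{if the cycle $C_{2m}$ is $A$-cordial, then an $A$-cordial labeling of $P_n$ can be enlarged to one of $P_{n+2m}$.} Given an $A$-cordial cyclic labeling $w_0,w_1,\dots,w_{2m-1}$ of $C_{2m}$ (indices mod $2m$), translate it so that $w_0$ is the label of the last vertex of $P_n$, and append to $P_n$ new vertices labeled $w_1,\dots,w_{2m-1},w_0$ in that order. The appended vertex labels then form the multiset $\{w_0,\dots,w_{2m-1}\}$ and the new edge labels form $\{w_0+w_1,w_1+w_2,\dots,w_{2m-1}+w_0\}$; by $A$-cordiality of $C_{2m}$ each of these is exactly two copies of every element of $A$, so the operation adds $(2,2,\dots,2)$ to each of $\lambda^v,\lambda^e$ and preserves almost-rectangularity. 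Granting this, it is enough to prove that $C_{2m}$ is $A$-cordial and that $P_n$ is $A$-cordial for all $1\le n\le 2m$: every longer path is then obtained by repeated splicing, because $\{1,\dots,2m\}$ hits every residue class modulo $2m$.

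\textbf{Remaining work and the main obstacle.} That $C_{2m}$ is $A$-cordial should be routine: there is no parity obstruction (the relevant identity $2\sum_{a\in A}a=0$ always holds), and explicit cyclic arrangements can be written down --- e.g.\ $0,1,\dots,m-1,0,1,\dots,m-1$ works for $\Z_m$ with $m$ odd, a small modification handles $m$ even, and a decomposition into cyclic factors handles the general abelian case. The substance lies in the base cases $P_n$ with $n\le 2m$. When $A=\Z_m$ is cyclic these are precisely the content of Hovey's theorem. For noncyclic $A$, I would fix $a\in A$ of order $d:=|a|>2$, put $H=\langle a\rangle$, and build an $A$-cordial labeling of $P_n$ coset by coset along $A/H$: inside each coset $b+H$ use a translated $\Z_d$-cordial labeling of a subpath, spending only a bounded number of edges on transitions between cosets; the hypothesis $d>2$ is exactly what provides the slack needed to balance the counts, and by the necessity direction some such slack must be present. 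The difficulty --- and the reason the present paper establishes the conjecture only for certain infinite families and for groups of small order --- is that this bookkeeping degenerates precisely when $A/H$ has even order: then all within-coset edge labels land in a single coset of $H$ (as happens for $A=\Z_2\times\Z_4$, whose only cyclic subgroups of order $>2$ have quotient $\Z_2$), so one must instead interleave the cosets more finely, and carrying out such constructions uniformly across all finite abelian groups with an element of order $>2$ is the crux of a full proof.
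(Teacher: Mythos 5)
The statement you are addressing is Conjecture~\ref{conj:path}; the paper does not prove it, only partial results toward it, so there is no complete proof of record to compare against. Your necessity half is correct and is essentially the paper's proof of Theorem~\ref{thm:exp2}: the identity cannot occur as an edge label of $P_{2^k}$ (adjacent labels are distinct and every element is its own inverse), so each nonzero element occurs exactly once as an edge label; the sum of the edge labels is $\sum_{a\in A}a=0$ for $k\ge 2$, while telescoping gives $x_1+x_{2^k}$, forcing the two endpoint labels to coincide --- a contradiction. (The paper additionally rules out $P_{2^k+1}$ by contracting the identity-labeled edge, but one non-cordial path already suffices for non-$\mathbb{P}$-cordiality.)

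The sufficiency half has a genuine gap, which to your credit you flag yourself: the entire content of that direction is the construction of $A$-cordial labelings of the base paths for every noncyclic even-order $A$ possessing an element of order $>2$, and your coset-by-coset sketch is not carried out --- as you observe, it already degenerates for $\Z_2\times\Z_4$, and no uniform construction is known (this is precisely why the statement is a conjecture). Two further points. First, your claim that $C_{2m}$-cordiality is ``routine'' is itself unproven for general $A$; the splicing step that uses it is sound, but the cycle labelings for even-order noncyclic groups require real work (compare Proposition~\ref{prop:doublepath}, whose $P_{2n}$ labeling closes up to such a cycle only after a nontrivial swapping construction). Second, your reduction to the base cases $P_n$, $n\le 2m$, is weaker than the paper's Theorem~\ref{thm:singlecheck}, which reduces everything to the single path $P_{|A|}$: there $\lambda^v$ and $\lambda^e$ have all parts in $\{0,1\}$, so truncation from either end handles all shorter paths, and gluing handles all longer ones. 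Adopting that reduction would shrink your ``remaining work'' to exhibiting one labeling per group --- but that exhibition is still the open crux, so what you have is a correct proof of one direction plus a research plan for the other.
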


Our main results are evidence toward Conjecture~\ref{conj:path}. The following proves one direction.

\begin{theorem}\label{thm:exp2}
	If $A = \bigtimes_{i=1}^m \Z_2$ is a product of copies of $\Z_2$ ($m >1$), then $P_{2m}$ and $P_{2m+1}$ are not $A$-cordial (and so $A$ is not $\mathbb{P}$-cordial).
\end{theorem}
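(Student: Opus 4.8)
The plan is to reduce $A$-cordiality of these paths to a few rigid combinatorial constraints on the label sequence, and then extract a contradiction from the single algebraic fact that $\sum_{a\in A}a=0$ in $A=\bigtimes_{i=1}^m\Z_2$ when $m>1$: in each coordinate exactly $2^{m-1}$ of the elements carry a $1$, and $2^{m-1}$ is even. (This is the one place $m>1$ is used in an essential way; the statement is false for $\Z_2$, which is consistent with $\Z_2$ being $\mathbb{P}$-cordial.) Set $n=2^m=|A|$, so $P_{2^m}$ has $n$ vertices and $n-1$ edges while $P_{2^m+1}$ has $n+1$ vertices and $n$ edges. Given a vertex labeling $f$, list the vertices in path order as $v_1,v_2,\dots$, put $x_i=f(v_i)$, and let $e_i=x_i+x_{i+1}$ be the induced label of $v_iv_{i+1}$. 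The workhorse is the telescoping identity $\sum_{i=a}^{b}e_i=x_a+x_{b+1}$, which holds because every interior $x_j$ appears twice in the sum and $2x_j=0$.

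First I would dispatch $P_{2^m}$. Since $\lambda^v$ is almost rectangular with $n$ parts summing to $n$, it must equal $(1,\dots,1)$, so $f$ is a bijection $V(P_{2^m})\to A$; likewise the $n-1$ edge labels must be pairwise distinct, so $\{e_1,\dots,e_{n-1}\}=A\setminus\{a_0\}$ for a unique $a_0\in A$. Summing all edge labels in two ways gives
\[
x_1+x_n \;=\; \sum_{i=1}^{n-1}e_i \;=\; \sum_{a\in A\setminus\{a_0\}}a \;=\; a_0 ,
\]
the last equality because $\sum_{a\in A}a=0$ and $-a_0=a_0$. As $f$ is a bijection, $x_1\ne x_n$, so $a_0=x_1+x_n\ne 0$; hence $0$ occurs among the edge labels, i.e.\ $x_j=x_{j+1}$ for some $j$, contradicting injectivity of $f$.

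Next I would handle $P_{2^m+1}$. Here $\lambda^e$ is almost rectangular with $n$ parts summing to $n$, so the $n$ edge labels are pairwise distinct and therefore exhaust $A$; in particular $0$ is an edge label, giving $x_j=x_{j+1}$ for some $j$, and $\sum_{i=1}^{n}e_i=\sum_{a\in A}a=0$, which via the telescoping identity yields $x_1=x_{n+1}$. On the other hand $\lambda^v$ is almost rectangular with $n$ parts summing to $n+1$, so exactly one element of $A$ is used twice by $f$ and all others once. But $n=2^m\ge 4$, so $1$ and $n+1$ are not adjacent, whence $\{1,n+1\}\ne\{j,j+1\}$; the two equalities $x_1=x_{n+1}$ and $x_j=x_{j+1}$ then force either two distinct repeated values or a single value occurring at least three times, each contradicting that exactly one value repeats. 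Therefore neither $P_{2^m}$ nor $P_{2^m+1}$ is $A$-cordial, and since one non-$A$-cordial path already certifies that $A$ is not $\mathbb{P}$-cordial, the theorem follows.

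The step I expect to cost the most care is the closing bookkeeping for $P_{2^m+1}$: one must check that the two forced coincidences among the $x_i$ really are incompatible with the multiplicity pattern $(2,1,\dots,1)$, which requires a short case split on whether $\{1,n+1\}$ and $\{j,j+1\}$ are disjoint or share an index, and which is exactly where $n\ge 4$ (hence $m>1$) is needed a second time. I would also want to state cleanly up front the translation from ``$\lambda^v,\lambda^e$ almost rectangular'' into the rigid facts used above --- ``$f$ is a bijection'', ``$f$ repeats exactly one value'', ``all edge labels are distinct'' --- each of which holds precisely because $|V(G)|$ or $|E(G)|$ differs from $|A|$ by at most $1$, so that no other almost-rectangular shape is available.
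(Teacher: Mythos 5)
Your proof is correct, and for $P_{2^m}$ it is essentially the paper's argument rearranged: both rest on the telescoping identity $\sum_i e_i = x_1+x_n$ together with the fact that $\sum_{a\in A}a=\id$ when $m>1$. The paper first argues that the missing edge label must be $\id$ (since an edge labeled $\id$ would force two adjacent vertices to carry equal labels) and then derives the contradiction $x_1=x_n$; you run the same two facts in the opposite order, deducing $a_0=x_1+x_n\neq \id$ and hence that $\id$ does appear as an edge label. For $P_{2^m+1}$ you diverge mildly: the paper contracts the $\id$-labeled edge to produce exactly the kind of labeling of $P_{2^m}$ already ruled out, reducing the second case to the first, whereas you argue directly that the two forced coincidences $x_1=x_{n+1}$ (from summing all $n$ edge labels) and $x_j=x_{j+1}$ (from the $\id$-labeled edge) are incompatible with the vertex multiplicity pattern $(2,1,\dots,1)$. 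Your finish is self-contained and makes explicit where $n\geq 4$ is needed; the paper's contraction is shorter but leans on the first half. Both are valid, and your small case split on whether $\{1,n+1\}$ and $\{j,j+1\}$ intersect is carried out correctly.
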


For the other direction of Conjecture~\ref{conj:path}, our main result is the following.

\begin{theorem}\label{thm:odd}
	If $|A|$ is odd, then $A$ is $\mathbb{P}$-cordial.
\end{theorem}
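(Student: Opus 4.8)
The plan is to exhibit, for each finite abelian group $A$ of odd order $n$, a single ordering $a_1, a_2, \dots, a_n$ of the elements of $A$ with the property that the $n$ cyclic consecutive sums $a_1+a_2,\ a_2+a_3,\ \dots,\ a_{n-1}+a_n,\ a_n+a_1$ are pairwise distinct (hence exhaust $A$), and then to label every path $P_N$ periodically by running through this ordering.

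Granting this ordering, the reduction to it is routine. Label the vertices $v_1,\dots,v_N$ of $P_N$ by $f(v_j)=a_{((j-1)\bmod n)+1}$, i.e.\ repeat the list $a_1,\dots,a_n$ cyclically. Writing $N=qn+r$ with $0\le r<n$, each of $a_1,\dots,a_r$ is used $q+1$ times and each of $a_{r+1},\dots,a_n$ is used $q$ times, so $\lambda^v$ is almost rectangular. For the edges, inside each full length-$n$ block the consecutive edges carry the labels $a_1+a_2,\dots,a_{n-1}+a_n$, while each junction between consecutive blocks (full or partial) carries $a_n+a_1$; using distinctness of the cyclic sums, a short count shows that every element of $A$ occurs within $1$ of $(N-1)/n$ times among the $N-1$ edges, so $\lambda^e$ is almost rectangular too. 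The cases $r=0$, $r\ge 1$, and $q=0$ are checked separately, but each is a one-line bookkeeping computation.

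The heart of the matter is constructing the ordering, and here the key arithmetic fact is that since $n$ is odd, multiplication by $2$ is a bijection on $A$, indeed on each cyclic factor. By the structure theorem write $A=\Z_{n_1}\times\cdots\times\Z_{n_k}$ with each $n_i$ odd. Order $A$ as a \emph{mixed-radix odometer}: identify $A$ with $\{0,1,\dots,n-1\}$ via mixed-radix digits and let $a_1,\dots,a_n$ be $0,1,\dots,n-1$ under this identification, so that the successor map $g$ (with $a_{i+1}=g(a_i)$, indices mod $n$) is ``add $1$ with carries'' — manifestly an $n$-cycle on the set $A$. It suffices to show $x\mapsto x+g(x)$ is a bijection of $A$, which is precisely the assertion that the cyclic consecutive sums are distinct. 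Going coordinatewise from the least-significant digit, one checks that the $\ell$-th coordinate of $x+g(x)$ is $2x_\ell+c_\ell \pmod{n_\ell}$, where $c_\ell\in\{0,1\}$ is the carry into position $\ell$ and depends only on the less significant coordinates of $x$. Hence the least-significant coordinate of $x+g(x)$ determines that of $x$ (as $2$ is invertible there), which determines the next carry, and so on; thus $x+g(x)$ determines $x$, giving injectivity and therefore the ordering.

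I expect the last lemma in the genuinely non-cyclic case to be the main obstacle: for a cyclic group the ordering $0,1,\dots,n-1$ works transparently since $g(x)=x+1$ and $x+g(x)=2x+1$, but for a product one must control the carries, which is exactly where the odometer successor and the coordinatewise invertibility of doubling enter. A secondary, purely routine, obstacle is assembling the edge-partition count cleanly across the block/junction structure for all residues of $N$ modulo $n$ (including the short paths $N<n$ and the exact multiples $N=qn$).
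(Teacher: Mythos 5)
Your proposal is correct, and its overall skeleton matches the paper's: both arguments reduce the theorem to producing a cyclic ordering $a_1,\dots,a_n$ of the elements of $A$ whose consecutive sums are pairwise distinct (equivalently, an $A$-cordial labeling of the cycle $C_n$ in which every group element occurs exactly once as a vertex label and once as an edge label), and then extend to arbitrary paths by wrapping that ordering around $P_N$ periodically --- your direct periodic bookkeeping is just an unwound version of the paper's gluing machinery (Lemma~\ref{lem:gluing} and Theorem~\ref{thm:singlecheck}), and your edge-count is right in all residue cases. Where you genuinely diverge is in the construction of the cyclic ordering itself. The paper proceeds by induction on the number of cyclic factors: it quotes Hovey's result that $C_n$ is $\Z_n$-cordial as a base case, and its Lemma~\ref{lem:puffingcycle} upgrades an $A$-cordial $C_{|A|}$ to an $(A\times\Z_k)$-cordial $C_{k|A|}$ by subdividing each edge into $k$ pieces and interleaving labels. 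You instead give a single explicit ordering --- the mixed-radix odometer --- and verify injectivity of $x\mapsto x+g(x)$ coordinatewise, using that the carry into each position depends only on lower positions and that doubling is invertible mod each odd $n_\ell$; this argument is sound (including at the wrap-around element, where all carries fire and $x+g(x)=x$). Your route is more self-contained and arguably more transparent, since it does not invoke Hovey's cycle theorem as a black box and avoids the induction entirely; the paper's subdivision lemma, on the other hand, is stated for an arbitrary $A$ with $C_{|A|}$ cordial and odd $k$, so it is a reusable tool beyond the odd-order setting. Both proofs ultimately exploit oddness in the same place: the invertibility of $2$.
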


It remains to understand the case of groups $A$ that are of even order and have an element of order greater than $2$.  We give some partial results in that setting. We also verify Conjecture~\ref{conj:path} for all abelian groups $A$ with $|A| < 24$.

While $\bigtimes_{i=1}^2 \Z_2$ is not $\mathbb{P}$-cordial, it is known to be \emph{weakly} $\mathbb{P}$-cordial \cite[Theorem~3.4]{Pechenik.Wise}. We show that $\bigtimes_{i=1}^3 \Z_2$ is similarly weakly $\mathbb{P}$-cordial, while not  $\mathbb{P}$-cordial. On the basis of these examples, \cite[Corollary~4.3]{Pechenik.Wise} (showing that there are infinitely-many $A$-cordial paths for each $A$), and Conjecture~\ref{conj:path}, we also expect the following to hold.

\begin{conjecture}\label{conj:weakpath}
	All finite abelian groups are weakly $\mathbb{P}$-cordial.
	\end{conjecture}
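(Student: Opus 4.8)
The plan is to establish Conjecture~\ref{conj:weakpath} --- that every finite abelian group $A$ is weakly $\mathbb{P}$-cordial --- by splitting into cases according to the structure of $A$. When $|A|$ is odd, Theorem~\ref{thm:odd} already gives that $A$ is $\mathbb{P}$-cordial, hence certainly weakly $\mathbb{P}$-cordial, so we may assume $2 \mid |A|$. If moreover $A$ has an element of order greater than $2$, one expects (and this is the content of Conjecture~\ref{conj:path}) that $A$ is actually $\mathbb{P}$-cordial; in the portion of that case already resolved by the partial results mentioned in the excerpt, weak $\mathbb{P}$-cordiality is immediate. So the essential remaining case is $A = \bigtimes_{i=1}^m \Z_2$, and here we already know from \cite[Theorem~3.4]{Pechenik.Wise} that $m=2$ works and from the claimed strengthening in the excerpt that $m=3$ works; the crux is a uniform argument for all $m$.

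For the case $A = \bigtimes_{i=1}^m \Z_2$, I would argue by an explicit construction of $A$-cordial labelings of $P_N$ for all sufficiently large $N$ (depending on $m$). The natural strategy is a \emph{block} construction: since $|A| = 2^m$, partition the path into consecutive segments and prescribe a repeating pattern of vertex labels whose ``period block'' uses each element of $A$ equally often on vertices and induces each element of $A$ equally often on the internal edges, with a bounded-size correction at the two ends of the path to absorb the residue of $N$ modulo the period and the modulo-$2$ discrepancies. Because the edge label of an edge $\{u,v\}$ is $u+v$ and in this group every element is its own inverse, a block that is ``balanced'' in an appropriate telescoping sense will produce balanced edge labels automatically. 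The key is to design one period block that is simultaneously vertex-balanced and edge-balanced, then show that gluing copies of it preserves edge-balance at the seams (or shifts the count by a controlled amount), and finally that the $O(2^m)$ leftover vertices at the ends can always be chosen to repair both partitions to almost-rectangular shape. This mirrors, and should subsume, the $m=2$ argument of \cite{Pechenik.Wise} and the $m=3$ argument promised earlier.

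The main obstacle I anticipate is the seam/parity bookkeeping: ``almost rectangular'' requires each part to lie in $\{\lambda_1, \lambda_1 - 1, 0\}$, so after laying down $\lfloor N/p \rfloor$ copies of a period block of length $p = 2^m$ (or a small multiple thereof), both $\lambda^v$ and $\lambda^e$ must be within $1$ of flat across all $2^m$ classes \emph{simultaneously}, and the edge classes are not independent of the vertex classes. Controlling how the edge-label multiset changes when one block is concatenated with the next --- the seam edge $\{v_p^{(j)}, v_1^{(j+1)}\}$ contributes an extra edge label determined by the last vertex of one block and the first vertex of the next --- requires choosing the block so that this seam contribution is either constant or cycles through $A$ with period dividing the number of blocks. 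Equivalently, one wants the sequence of ``block-boundary sums'' to be itself evenly distributed. Arranging a single block that threads all of these constraints at once, and verifying that the end-corrections have enough freedom, is where the real work lies; I would expect to need $N$ large compared to $4^m$ or so, which is harmless for a weak-cordiality statement. Everything else --- the odd-order case and the case with an element of order $>2$ --- should follow formally from results already in hand.
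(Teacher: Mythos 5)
This statement is a \emph{conjecture} in the paper, not a theorem: the authors offer no proof of it, only evidence (the known fact that $\Z_2\times\Z_2$ is weakly $\mathbb{P}$-cordial from \cite[Theorem~3.4]{Pechenik.Wise}, their own verification for $\bigtimes_{i=1}^3\Z_2$ via explicit labelings, and the general results of Sections~\ref{sec:general}--\ref{sec:othereven}). Your proposal likewise does not constitute a proof; it is a plan whose two essential cases are both left open. First, for groups of even order possessing an element of order greater than $2$, you defer to Conjecture~\ref{conj:path}, which is itself unproven --- the paper establishes it only for $|A|<24$ (Proposition~\ref{prop:smallorder}) and gives the partial result Proposition~\ref{prop:doublepath} for $\Z_2\times\Z_k$. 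Saying that ``one expects'' these groups to be $\mathbb{P}$-cordial does not discharge the case.

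Second, and more centrally, for $A=\bigtimes_{i=1}^m\Z_2$ you describe a block construction but never exhibit the period block, never verify that it is simultaneously vertex- and edge-balanced, and never carry out the seam and end-correction bookkeeping that you yourself identify as ``where the real work lies.'' This is exactly the part that is hard: the paper's own treatment of $m=3$ required seven separate ad hoc labelings (of $P_6$, $P_7$, $P_{10}$, $P_{14}$, $P_{15}$, $P_{16}$, $P_{24}$) glued together via Lemmas~\ref{lem:extby1} and~\ref{lem:gluing}, and no uniform pattern for general $m$ is known. Note also that Theorem~\ref{thm:exp2} shows $P_{2m}$ and $P_{2m+1}$ fail to be $\Z_2^m$-cordial, so any block construction must route around these exceptional lengths; a genuinely uniform argument would be a substantial new contribution rather than a routine extension of the $m=2,3$ cases. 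As it stands, your writeup correctly organizes the problem and correctly disposes of the odd-order case via Theorem~\ref{thm:odd}, but every remaining case rests on an unproven conjecture or an unexecuted construction.
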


This paper is structured as follows. In Section~\ref{sec:general}, we prove some results that hold for all finite abelian groups $A$ and that we will need in later sections. In particular, Theorem~\ref{thm:singlecheck} allows us to demonstrate $\mathbb{P}$-cordiality of $A$ by showing the $A$-cordiality of a single path. In Section~\ref{sec:odd}, we apply Theorem~\ref{thm:singlecheck} to prove Theorem~\ref{thm:odd}, showing that all $A$ of odd order are $\mathbb{P}$-cordial. In Section~\ref{sec:order2}, we study products of groups of order two, proving Theorem~\ref{thm:exp2} and verifying that $\bigtimes_{i=1}^3 \Z_2$ is weakly $\mathbb{P}$-cordial. Finally, Section~\ref{sec:othereven} is devoted to other groups of even order, including a complete analysis of $\mathbb{P}$-cordiality for all abelian groups of small order.

\section{Results for general groups}\label{sec:general}

Here we collect various results that hold for all finite abelian groups $A$. We use these results later to establish our main results.

\begin{lemma}\label{lem:extby1}
Let $A$ be any abelian group of order $n$. Let $f,k \in \Z_{\geq 0}$ be nonnegative integers such that $f \leq \frac{n}{2}$. Then if the path $P_{nk+f}$ is $A$-cordial, so is $P_{nk+f+1}$.
\end{lemma}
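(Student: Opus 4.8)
The plan is to extend a given $A$-cordial labeling of $P_{nk+f}$ to one of $P_{nk+f+1}$ by appending a single new vertex at one end of the path and choosing its label by a pigeonhole argument. Write $n = |A|$. The case $n = 1$, and the degenerate case $k = f = 0$ (where $P_0$ is vacuous and $P_1$ is trivially $A$-cordial), may be set aside, so assume $n \geq 2$ and $nk+f \geq 1$. The starting observation is the rigidity of $A$-cordial labelings of paths: in any $A$-cordial labeling of a path on $M$ vertices (and hence $M-1$ edges), the vertex labels are spread over $A$ as evenly as possible — exactly $M \bmod n$ elements occur on $\lceil M/n\rceil$ vertices and the remaining $n-(M\bmod n)$ occur on $\lfloor M/n\rfloor$ vertices — and likewise the $M-1$ edge labels are spread over $A$ as evenly as possible. (Almost rectangularity of a multiset with prescribed total and at most $n$ parts pins it down uniquely: it must be the evenly spread one.)

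Fix an $A$-cordial labeling $\ell$ of $P_{nk+f}$, with vertices $v_1, v_2, \dots, v_{nk+f}$ in path order, and set $c \coloneqq \ell(v_{nk+f})$. I would construct a labeling $\ell'$ of $P_{nk+f+1}$ that agrees with $\ell$ on $v_1,\dots,v_{nk+f}$ and assigns some element $a \in A$ to the new endpoint $v_{nk+f+1}$; passing from $\ell$ to $\ell'$ leaves all old edge labels unchanged and introduces exactly one new edge, with label $a+c$. Let $S_v \subseteq A$ be the set of elements occurring on the \emph{smaller} number of vertices under $\ell$, and $S_e \subseteq A$ the set occurring on the \emph{smaller} number of edges under $\ell$, with the convention that $S_v$ (resp.\ $S_e$) is all of $A$ when these vertex (resp.\ edge) multiplicities all coincide. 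Comparing the even distributions for $nk+f$ and for $nk+f+1$ vertices, one sees that bumping the multiplicity of $a$ among vertices keeps $\lambda^v$ almost rectangular precisely when $a \in S_v$; comparing the even distributions for $nk+f-1$ and for $nk+f$ edges, bumping the multiplicity of $a+c$ among edges keeps $\lambda^e$ almost rectangular precisely when $a+c \in S_e$. Hence $\ell'$ is $A$-cordial whenever $a \in S_v \cap (S_e - c)$.

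It remains to see that $S_v \cap (S_e - c) \neq \varnothing$. Applying rigidity to the $nk+f$ vertices and $nk+f-1$ edges of $\ell$: for $f \geq 1$ one gets $|S_v| = n-f$ and $|S_e| = n-f+1$, while for $f = 0$ one gets $|S_v| = n$ and $|S_e| = 1$ (with $S_e$ the singleton consisting of the unique under-represented, possibly absent, edge label of $\ell$). In both cases $|S_v| + |S_e| \geq n+1$: when $f \geq 1$ this says $2n-2f+1 \geq n+1$, which is exactly the hypothesis $f \leq n/2$. Since $|S_e - c| = |S_e|$, the subsets $S_v$ and $S_e - c$ of the $n$-element set $A$ cannot be disjoint, and any $a$ in their intersection yields an $A$-cordial $\ell'$, so $P_{nk+f+1}$ is $A$-cordial.

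The crux is this final pigeonhole step, and the hypothesis $f \leq n/2$ is used there and only there, to force $|S_v| + |S_e| > |A|$. What a full proof must handle with a little care — though without real difficulty — is the bookkeeping of the even distributions in the boundary cases $f = 0$ and $k = 0$, where ``smaller multiplicity'' can be $0$ and where the edge count takes the shape $n(k-1)+(n-1)$ rather than $nk+(f-1)$; all of these fit the template above, and in fact when $f = 0$ one can skip the pigeonhole entirely and simply set $a = b_0 - c$, where $b_0$ is the unique element under-represented among the edge labels of $\ell$.
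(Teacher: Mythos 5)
Your proof is correct and follows essentially the same route as the paper's: attach one new vertex at an end of the path, observe that $n-f$ vertex labels and (for $f\geq 1$) $n-f+1$ edge labels remain admissible, and conclude by pigeonhole using $f \leq \frac{n}{2}$, with $f=0$ handled via the unique under-represented edge label. Your ``rigidity'' preamble merely makes explicit the multiplicity counts that the paper's proof uses implicitly, so there is no substantive difference.
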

\begin{proof}
This is a generalization of \cite[Lemma~3]{Hovey} and the proof is essentially the same, but we include the details for completeness.

Consider an $A$-cordial labeling of $P_{nk+f}$ with almost rectangular partitions $\lambda^v, \lambda^e$.  Attach a new vertex $x$ to one of the ends of the path by a new edge $y$. It suffices to exhibit an appropriate vertex label for $x$.
There are $n-f$ available vertex labels for it that would keep $\lambda^v$ almost rectangular.

If $f=0$, then there is a unique label for $y$ that makes $\lambda^e$ almost rectangular, and we may choose the label for $x$ accordingly.

If $f>0$, there are only $f-1$ edge labels on $y$ that would make $\lambda^e$ no longer almost rectangular. So we can find an appropriate label for $x$ provided that $f-1 < n-f$, that is if $2f < n+1$.
\end{proof}

\begin{lemma}\label{lem:shift}
Suppose $G$ is an $A$-cordial graph and $a \in A$. Then for any $A$-cordial labeling of $G$, we may add $a$ to each vertex label to obtain another $A$-cordial labeling. 
\end{lemma}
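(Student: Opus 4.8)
The plan is to show that adding a fixed group element $a$ to every vertex label preserves the almost-rectangular property of \emph{both} partitions $\lambda^v$ and $\lambda^e$. The key observation is that this operation simply permutes the set of possible labels, so the two partitions that record label multiplicities are entirely unchanged.

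First I would set up notation: fix an $A$-cordial labeling $\ell\colon V(G)\to A$, and let $\ell'$ be the new labeling defined by $\ell'(v)=\ell(v)+a$ for every vertex $v$. Since the map $t\mapsto t+a$ is a bijection from $A$ to $A$ (its inverse is $t\mapsto t-a$), the number of vertices carrying a given label $b$ under $\ell'$ equals the number of vertices carrying the label $b-a$ under $\ell$. Hence the multiset $\{\#\ell'^{-1}(b) : b\in A\}$ is exactly the multiset $\{\#\ell^{-1}(b) : b\in A\}$, and after sorting into weakly decreasing order we get the \emph{same} partition $\lambda^v$. In particular $\lambda^v$ is still almost rectangular.

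Next I would handle the edge labels. For an edge $e=uv$, its induced label under $\ell'$ is $\ell'(u)+\ell'(v) = \ell(u)+\ell(v)+2a$, i.e.\ the old edge label shifted by the fixed element $2a$. Again $t\mapsto t+2a$ is a bijection of $A$, so by the same counting argument the multiset of edge-label multiplicities is unchanged, and the sorted partition $\lambda^e$ is literally the same as before; in particular it remains almost rectangular. Therefore $\ell'$ is an $A$-cordial labeling of $G$, which is exactly the claim.

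I do not expect any genuine obstacle here — the statement is essentially immediate once one notices that translation by a fixed element is a bijection and that $\lambda^v,\lambda^e$ depend only on the multiset of label multiplicities, not on which labels occur. The only thing to be careful about is the edge labels: one might worry that shifting vertex labels by $a$ shifts edge labels by something more complicated, but in fact it shifts them uniformly by $2a$, so the same reasoning applies verbatim. This lemma will presumably be used later to normalize a given $A$-cordial labeling (e.g.\ to assume a particular endpoint of a path carries label $0$).
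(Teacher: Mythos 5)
Your proof is correct, and it is exactly the standard translation argument: the paper itself gives no proof but simply cites Hovey's Lemma~1, whose content is this same observation that vertex labels shift by $a$ and edge labels shift uniformly by $2a$, both bijections of $A$ that leave the multiplicity partitions $\lambda^v,\lambda^e$ unchanged. Your anticipated use of the lemma (normalizing an endpoint label) is also precisely how the paper applies it in Lemma~\ref{lem:gluing}.
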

\begin{proof}
	This is \cite[Lemma~1]{Hovey}.
\end{proof}

\begin{lemma}\label{lem:gluing}
	Suppose $|A|=n$ and let $k,m \in \Z_{> 0}$ be positive integers. If $P_k$ and $P_{mn}$ are both $A$-cordial, then so is $P_{mn+k}$. 
\end{lemma}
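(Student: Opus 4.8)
The plan is to glue the labeled path $P_k$ onto one end of the labeled path $P_{mn}$. The crucial preliminary observation is that an $A$-cordial labeling of $P_{mn}$ is rigidly constrained by the divisibility $n \mid mn$: since the $mn$ vertex labels are distributed as evenly as possible over the $n$ elements of $A$, each element of $A$ is used as a vertex label exactly $m$ times; and since the $mn-1$ edge labels are likewise spread as evenly as possible, each element of $A$ occurs as an edge label exactly $m$ times except for a single ``deficient'' element $b \in A$, which occurs exactly $m-1$ times. (For $m=1$ this just says one edge label is missing.)

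With this in hand, I would fix $A$-cordial labelings $\phi$ of $P_{mn}$ and $\psi$ of $P_k$, pick an endpoint $u$ of $P_{mn}$ and an endpoint $w$ of $P_k$, and build $P_{mn+k}$ by adding one new edge between $u$ and $w$. The one genuinely chosen step is to first replace $\psi$, using Lemma~\ref{lem:shift}, by the still-$A$-cordial labeling $\psi + a$ with $a \coloneqq b - \phi(u) - \psi(w)$, so that the new edge $uw$ acquires label $\phi(u) + (\psi(w)+a) = b$.

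It then remains to check the two partitions of the glued labeling, and this is where the rigidity above pays off. Each element $g \in A$ is used as a vertex label exactly (its multiplicity under $\psi$) $+\,m$ times; since shifting $\psi$ only permutes labels, the $\psi$-multiplicities are still as even as possible, and adding $m$ uniformly yields precisely the evenly-distributed shape on $mn+k$ vertices. For edges: if $g \neq b$ the multiplicity is (its $\psi$-edge multiplicity) $+\,m$, while if $g = b$ it is (its $\psi$-edge multiplicity) $+\,(m-1)+1$, the last $+1$ being the new edge $uw$; either way it equals (its $\psi$-edge multiplicity) $+\,m$, so the edge labels are again distributed as evenly as possible over the $mn+k-1$ edges. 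Hence the glued labeling is $A$-cordial.

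I do not expect a serious obstacle once the rigidity of $A$-cordial labelings of $P_{mn}$ is noticed; the bookkeeping then collapses. The point to be careful about is the translation by $a$, which must be arranged so that the gluing edge absorbs the single deficiency of $P_{mn}$'s edge partition — it is precisely this (and hence the hypothesis $n \mid mn$ on the larger piece) that makes both partitions come out evenly distributed. Gluing two arbitrary $A$-cordial paths would instead require the deficiencies of their edge partitions to be mutually compatible, which need not happen.
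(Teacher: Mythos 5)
Your proof is correct and takes essentially the same route as the paper's: both rest on the observation that an $A$-cordial labeling of $P_{mn}$ uses every element of $A$ exactly $m$ times as a vertex label and exactly $m$ times as an edge label except for one deficient element, and both invoke Lemma~\ref{lem:shift} to force the new gluing edge to carry that deficient label, after which the multiplicity count is identical. The only cosmetic difference is that the paper shifts both labelings (putting $\id$ at the chosen end of $P_{mn}$ and the deficient element at the chosen end of $P_k$), whereas you shift only the $P_k$ labeling to achieve the same effect.
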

\begin{proof}
Consider $A$-cordial labelings of $P_k$ and $P_{mn}$. Every element of $A$ appears exactly $m$ times as a vertex label of $P_{mn}$. There is a unique element $a \in A$ that appears $m-1$ times as an edge label of $P_{mn}$, while all others appear exactly $m$ times. By Lemma~\ref{lem:shift}, we may assume that $P_{mn}$ has an endpoint labeled by the identity element $\id$ and that $P_k$ has an endpoint labeled $a$. Join the $\id$-end of $P_{mn}$ to the $a$-end of $P_k$ by a new edge (necessarily labeled $a$) to obtain a labeled $P_{mn+k}$.
	
	Then we have added to $P_k$ exactly $m$ of each vertex label and exactly $m$ of each edge label. That is, for all $1 \leq i \leq n$, we have $\lambda^v_i(P_{mn+k}) = \lambda^v_i(P_k) + m$ and $\lambda^e_i(P_{mn+k}) = \lambda^e_i(P_k) + m$. Since the labeling of $P_k$ was $A$-cordial, it follows that this labeling of $P_{mn+k}$ is as well.
\end{proof}

\begin{theorem}\label{thm:singlecheck}
Suppose $|A|=n$. Then $A$ is $\mathbb{P}$-cordial if and only if $P_n$ is $A$-cordial.
\end{theorem}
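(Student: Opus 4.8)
The plan is to prove the two directions separately, with the forward direction being trivial and the reverse direction being the substance. If $A$ is $\mathbb{P}$-cordial, then in particular $P_n$ is $A$-cordial, since $P_n \in \mathbb{P}$; that disposes of one implication immediately. For the converse, I would assume $P_n$ is $A$-cordial and show that $P_m$ is $A$-cordial for every $m \geq 1$. The strategy is to reduce an arbitrary length $m$ to a controlled form modulo $n$ and then build the desired labeling using the two gluing/extension tools already available, namely Lemma~\ref{lem:extby1} and Lemma~\ref{lem:gluing}.

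Here is how I would organize the induction. Write $m = nk + f$ with $0 \le f < n$. The key observation is that Lemma~\ref{lem:extby1} lets us pass from $P_{nk+f}$ to $P_{nk+f+1}$ whenever $f \le n/2$, so the ``hard'' residues are only those $f$ with $f > n/2$ — and even those we can reach by extension once we know the case $f' = \lceil n/2 \rceil$ or so. More cleanly: it suffices to establish $A$-cordiality of $P_m$ for all $m$ in some set $S$ of representatives such that every $P_{m'}$ with $m'$ large can be obtained from some $P_m$, $m \in S$, by a sequence of single-vertex extensions covered by Lemma~\ref{lem:extby1}. Concretely, I would first show $P_j$ is $A$-cordial for all $1 \le j \le n$: for small $j$ one can start from the trivial labeling of $P_1$ and repeatedly apply Lemma~\ref{lem:extby1} (with $k=0$), which is valid as long as the current length is at most $n/2$; to cross the ``halfway'' barrier one uses the hypothesis that $P_n$ itself is $A$-cordial together with Lemma~\ref{lem:extby1} run ``downward in deficiency'' — i.e., once $P_n$ is known, $P_{n}$ extends to nothing new, but the residues between $n/2$ and $n$ must instead be obtained by a different route. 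This is the point that needs care: I expect the clean argument is to use Lemma~\ref{lem:gluing} with $P_n$ (the case $m=1$ there) to get $P_{n+k}$ from $P_k$, thereby effectively ``wrapping around,'' so that a residue $f$ with $f>n/2$ in length $nk+f$ is reached as $P_{n(k-1)} \text{-glued-to-} P_{n+f}$, and $P_{n+f}$ in turn comes from $P_{n} = P_n$ extended... — so the real content is checking that the two lemmas together cover all residues.

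So the backbone is: (1) prove $P_j$ is $A$-cordial for $1 \le j \le n$, using $P_n$ as the seed and Lemmas~\ref{lem:extby1}, \ref{lem:gluing} to fill in the shorter lengths (for $j \le \lceil n/2 \rceil$, extend up from $P_1$; for $\lceil n/2 \rceil < j < n$, obtain $P_j$ from $P_n$, or from $P_{n+j'}$ built via Lemma~\ref{lem:gluing}, noting $n < n+j' $ gives a fresh residue reachable by Lemma~\ref{lem:extby1}); (2) given any $m = nk+f$ with $k \ge 1$ and $1 \le f \le n$, write $m = mn \cdot 1 + f$ — apply Lemma~\ref{lem:gluing} to $P_f$ (known $A$-cordial from step (1)) and $P_{nk}$ (which is $P_{n\cdot k}$, an $A$-cordial path because... it is the $k$-fold analogue, itself handled by induction on $k$ via Lemma~\ref{lem:gluing} applied to $P_n$ and $P_{n(k-1)}$). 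Thus $P_{nk}$ is $A$-cordial for all $k \ge 1$ by induction, and then $P_{nk+f} = P_{nk + f}$ is $A$-cordial by Lemma~\ref{lem:gluing} (gluing $P_f$ to $P_{nk}$) for $1 \le f \le n-1$, with $f=0$ already done.

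The main obstacle I anticipate is the ``halfway barrier'' in Lemma~\ref{lem:extby1}: single-vertex extension only works up to deficiency $n/2$, so one cannot simply build every path upward from $P_1$. The resolution — and the crux of the whole theorem — is that Lemma~\ref{lem:gluing} provides exactly the complementary move, letting us jump by a full multiple of $n$ at a time (preserving almost-rectangularity exactly, since we add $m$ copies of every label), and the hypothesis ``$P_n$ is $A$-cordial'' feeds this gluing machinery. I would double-check the edge cases: $n$ even versus odd in the inequality $2f < n+1$ of Lemma~\ref{lem:extby1} (for $n$ even the admissible $f$ are $f \le n/2$, for $n$ odd $f \le (n-1)/2$), and confirm that between the admissible extension range and the gluing-by-$n$ move there is no gap in the residues $1, \dots, n$ — this is where a short explicit induction, rather than hand-waving, is required, and it is the step I would write out most carefully.
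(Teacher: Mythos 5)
Your forward direction and your treatment of lengths $m = hn+k$ with $h \geq 1$ (gluing copies of the $A$-cordial $P_n$ onto a shorter $A$-cordial path via Lemma~\ref{lem:gluing}) match the paper. But there is a genuine gap in your step (1), and it sits exactly where you suspected it would. Building upward from $P_1$ with Lemma~\ref{lem:extby1} only reaches $P_j$ for $j \leq \lfloor n/2\rfloor + 1$, since the extension requires the current residue $f$ to satisfy $f \leq n/2$. Your proposed fix --- wrap around by gluing $P_n$ onto $P_k$ to get $P_{n+k}$ and then extend --- hits the identical barrier one period later: from $P_n$ you can extend up to $P_{n + \lfloor n/2\rfloor + 1}$ but no further, so the residues $j$ with $\lfloor n/2\rfloor + 1 < j < n$ are never produced. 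None of Lemmas~\ref{lem:extby1}, \ref{lem:shift}, or \ref{lem:gluing} gives you a path of length strictly between $n/2 + 1$ and $n$.

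The missing idea is to go \emph{downward} from $P_n$ rather than upward from $P_1$. In an $A$-cordial labeling of $P_n$ with $|A| = n$, every vertex label appears exactly once and every edge label appears at most once, so $\lambda^v = (1^n)$ and $\lambda^e = (1^{n-1},0)$. Deleting an end vertex (and its incident edge) therefore keeps all parts of both partitions in $\{1,0\}$, which is still almost rectangular; iterating, $P_j$ is $A$-cordial for every $j < n$. (Note this deletion trick is special to the case $\lambda_1 = 1$ --- deleting a vertex from a general $A$-cordial path can break almost-rectangularity --- which is why the hypothesis that $P_n$ itself, rather than some longer path, is $A$-cordial is the right seed.) With that observation your step (1) is immediate and the rest of your argument goes through.
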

\begin{proof}
	One implication is trivial. For the other, suppose $P_n$ is $A$-cordial. Then iteratively deleting vertices from either end of an $A$-cordial labeling of $P_n$, we see that $P_k$ is $A$-cordial for all $k < n$.
	
	Now consider any path $P_m$. Write $m = hn + k$ for some $h,k \in \Z$ and $0 \leq k < n$. Then we may construct an $A$-cordial labeling of $P_m$ by gluing $h$ $A$-cordial labelings of $P_n$ to an $A$-cordial labeling of $P_k$, as in Lemma~\ref{lem:gluing}.
\end{proof}

The previous theorem makes it easy to demonstrate that a group $A$ is $\mathbb{P}$-cordial because it is thereby sufficient to exhibit a single $A$-cordial labeling of a single graph. For any fixed $A$, Theorem~\ref{thm:singlecheck} indeed makes it a finite check to determine whether $A$ is $\mathbb{P}$-cordial. In practice, however, this exhaustive check is not feasible for groups $A$ that are not of extremely small order.

\section{Groups of odd order}\label{sec:odd}

In this section, we prove Theorem~\ref{thm:odd}, showing that all groups of odd order are $\mathbb{P}$-cordial. The reader may find the technical details of the proof clarified by consulting the example for $\Z_3 \times \Z_3$ 
displayed in Figure~\ref{fig:Z3xZ3}.

\begin{lemma}\label{lem:puffingcycle}
	Suppose $|A| = n$ and the cycle $C_n$ is $A$-cordial. Then, for $k$ odd, the cycle $C_{kn}$ is $A \times \Z_k$-cordial.
\end{lemma}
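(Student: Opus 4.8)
The plan is to take an $A$-cordial labeling of $C_n$ and "puff it up" to a labeling of $C_{kn}$ by elements of $A \times \Z_k$, replacing each vertex of $C_n$ by a length-$k$ run and then threading the $\Z_k$-coordinate cyclically so that the new labeling stays balanced in both coordinates. Concretely, write the vertices of $C_n$ cyclically as $v_0, v_1, \dots, v_{n-1}$ with $A$-labels $a_0, \dots, a_{n-1}$, and let the vertices of $C_{kn}$ be $w_{i,j}$ for $0 \le i < n$, $0 \le j < k$, in cyclic order (so $w_{i,k-1}$ is followed by $w_{i+1,0}$, indices mod $n$). I would label $w_{i,j}$ by $(a_i, c_{i,j})$ for some function $c_{i,j} \in \Z_k$ to be chosen.

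First I would analyze what balancing requires. Since each $a_i$ is used exactly once on $C_n$, each vertex label $a_i$ is used exactly $k$ times on $C_{kn}$; for the vertex partition to be rectangular we need, for each fixed $i$, the multiset $\{c_{i,0}, \dots, c_{i,k-1}\}$ to be all of $\Z_k$ — i.e. $j \mapsto c_{i,j}$ is a permutation of $\Z_k$. The natural choice is an arithmetic progression: fix a generator, say $1 \in \Z_k$, and set $c_{i,j} = s_i + j$ for some offsets $s_i \in \Z_k$, where addition is mod $k$. Then the edge within a run, from $w_{i,j}$ to $w_{i,j+1}$, gets $A$-label $2a_i$ and $\Z_k$-label $2s_i + 2j+1$; as $j$ ranges over $0, \dots, k-2$ this hits $k-1$ of the $k$ residues (all but $2s_i + 2(k-1)+1$, using that $2$ is invertible mod $k$ since $k$ is odd). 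The "seam" edge from $w_{i,k-1}$ to $w_{i+1,0}$ gets $A$-label $a_i + a_{i+1}$ and $\Z_k$-label $s_i + (k-1) + s_{i+1} + 1 = s_i + s_{i+1}$. So the edge $A$-labels are: each label of the form $2a$ (for $a$ an $A$-label of $C_n$) appearing $k-1$ times from internal edges, plus the $n$ seam labels $a_i + a_{i+1}$, which are exactly the edge labels of the $A$-cordial $C_n$ — so they cover $A$ with one label (call it $e^*$) missing once. Net: every $A$-value appears $k$ times as an edge label except $e^*$, which appears $k-1$ times, provided the set $\{2a : a \text{ a vertex label}\}$ is all of $A$; that holds because $|A|$ is odd makes doubling a bijection on $A$.

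The remaining task — and the part I expect to be the main obstacle — is to choose the offsets $s_i$ so that the $\Z_k$-coordinate of the edge labels is *also* almost rectangular, simultaneously with the $A$-coordinate working out. For each $i$ the internal edges contribute the residues $\{2s_i + 1, 2s_i+3, \dots\} = \Z_k \setminus \{2s_i - 1\}$ (all but one residue, call the missing one $m_i := 2s_i - 1$), and the seam edges contribute $s_i + s_{i+1}$ for $i = 0, \dots, n-1$. Summing over all $i$, the internal edges give each residue $r$ a count of $n - \#\{i : m_i = r\}$, and we need the seam contributions to exactly fill in these $n$ deficiencies, one per occurrence of each $m_i$ — i.e. we want the multiset $\{s_i + s_{i+1} : i\}$ to equal the multiset $\{2s_i - 1 : i\} = \{m_i : i\}$ as multisets in $\Z_k$, which would make the total edge count constant (equal to $n$) in every residue, hence rectangular. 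I would try the simplest possibility $s_i + s_{i+1} = 2s_i - 1$ for every $i$, i.e. $s_{i+1} = s_i - 1$, giving $s_i = -i$ (with $s_0 = 0$); consistency around the cycle requires $s_n = s_0$, i.e. $-n \equiv 0 \pmod k$, which need *not* hold. So the genuine difficulty is reconciling the cyclic consistency condition $\sum_i (s_{i+1} - s_i) = 0$ with the matching condition; I would instead aim only for the multiset equality (not the pointwise one), e.g. by choosing $s_i$ so that $\{s_i + s_{i+1}\}$ and $\{2s_i - 1\}$ coincide after reindexing, or — failing an exact match — allowing a controlled discrepancy of $\pm 1$ in finitely many residues while checking it stays within "almost rectangular." A clean approach: set $s_i = $ (partial sums of a sequence $t_i \in \Z_k$ with $\sum t_i = 0$), reducing to a combinatorial selection problem mod $k$ that can be solved explicitly using that $k$ is odd; I'd expect the odd-$k$ hypothesis to enter here a second time (beyond invertibility of $2$) to guarantee such a selection exists, and I would verify the construction on the running example $\Z_3 \times \Z_3$ (Figure~\ref{fig:Z3xZ3}) to pin down the right offsets before writing the general argument.
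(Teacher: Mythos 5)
Your construction diverges from the paper's at the crucial point, and the divergence is what creates the obstacle you correctly identify but do not overcome. You blow up each vertex $v_i$ of $C_n$ into a run of $k$ consecutive vertices all carrying the same first coordinate $a_i$; the paper instead subdivides each \emph{edge} $xy$ of $C_n$ into a path of $k$ edges and assigns the new vertices' first coordinates \emph{alternately} $\ell(x),\ell(y)$ (consistent at both ends precisely because $k$ is odd). Under that alternation every one of the $k$ edges inside a subdivided edge has first coordinate $\ell(x)+\ell(y)$, i.e.\ an original edge label of $C_n$, so the $A$-part of the edge labels is balanced with no appeal to the doubling map; one then only threads the $\Z_k$-coordinates so that they advance by $1$ along consecutive edges, and the two coordinates decouple completely. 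In your version the internal edges carry first coordinate $2a_i$, which (i) forces you to assume doubling is a bijection on $A$, i.e.\ $|A|$ odd --- a hypothesis the lemma does not grant (only $k$ is assumed odd) --- and (ii) couples the $A$-coordinate to the $\Z_k$-coordinate in a way your offset conditions do not resolve.

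Point (ii) is the genuine gap. Since $C_{kn}$ has $kn$ edges and $|A\times\Z_k|=kn$, you need each element of $A\times\Z_k$ to occur exactly once as an edge label: joint balance, not balance of the two marginals separately. The condition you reduce to --- that $\{s_i+s_{i+1}\}$ and $\{2s_i-1\}$ agree as multisets in $\Z_k$ --- only makes each residue of $\Z_k$ occur $n$ times among edge labels; it does not prevent the pair $(2a_i,m_i)$ from being missed entirely while some other pair occurs twice. The correct requirement is pointwise and entangled with the $C_n$-labeling: the unique seam $j(i)$ whose $A$-label is $2a_i$ (defined by $a_{j(i)}+a_{j(i)+1}=2a_i$) must have $\Z_k$-label exactly $m_i$, i.e.\ $s_{j(i)}+s_{j(i)+1}\equiv 2s_i \pmod k$. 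Your trial $s_{i+1}=s_i-1$ implicitly assumes $j(i)=i$, i.e.\ $a_{i+1}=a_i$, which never holds, and you have not shown the actual system is solvable; the fallback of a controlled discrepancy would in any case break the downstream use of the lemma, where the induction needs every element to appear exactly once as an edge label of the cycle. There are also small slips: the seam label is $s_i+s_{i+1}-1$, not $s_i+s_{i+1}$, and the $n$ edge labels of the cordial $C_n$ exhaust $A$ with nothing missing (you appear to be thinking of $P_n$). The missing idea, in short, is the paper's alternation of the two endpoint labels along each subdivided edge.
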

\begin{proof}
	Consider an $A$-cordial labeling of $C_n$ and write $\ell(v)$ for the label of vertex $v$. Fix a cyclic orientation of $C_n$. Subdivide each side of $C_n$ by inserting $k-1$ new vertices into each edge of $C_n$ to obtain a cycle $C_{kn}$. Each (directed) edge $q = \overrightarrow{xy}$ of $C_n$ with endpoints $x,y$ becomes an induced (oriented) path $P_n$ with vertices $x \rightarrow q_1 \rightarrow \dots \rightarrow q_{k-1} \rightarrow y$. Label these vertices alternately $\ell(x)$ and $\ell(y)$, so that $x$ is still labeled $\ell(x)$ and $y$ is still labeled $\ell(y)$. More precisely, we define $\ell(q_i)=\ell(x)$ if $i$ is even and $\ell(q_i)=\ell(y)$ if $i$ is odd.	
	
	We claim that this process results in an $A$-cordial labeling of $C_{kn}$. Note that if $x$ has neighbors $y,z$ in $C_n$, then in our $C_{kn}$ the label $\ell(x)$ appears  on $\frac{k-1}{2}+1$ vertices along the path from $x$ to $y$ and on $\frac{k-1}{2}+1$ vertices along the path from $z$ to $x$, with the vertex $x$ being shared. Hence, exactly $k$ vertices of $C_{kn}$ are labeled $\ell(x)$. Since we started with a labeling of $C_n$ with every element of $A$ appearing exactly once as a vertex label, this means that the vertex labels of our $C_{kn}$ yield an almost rectangular partition.
	
	All of the edges along the path from $x$ to $y$ in $C_{kn}$ are labeled $\ell(x) + \ell(y)$. Hence the edge labels of our $C_{kn}$ are the edge labels of the original $C_n$ in the same order, but each now repeated $k$ times in a row. Since each element of $A$ appeared exactly once as an edge label of $C_n$, this means that the edge labels of the $C_{kn}$ also give an almost rectangular partition, so our construction is an $A$-cordial labeling of $C_{kn}$.

	Finally, we will convert this labeling of $C_{kn}$ into an $A \times \Z_k$-cordial labeling. For each of the vertices $x$ from $C_n$, label $x$ by the ordered pair $(\ell(x), 0) \in A \times \Z_k$. Along the oriented path $x \rightarrow q_1 \rightarrow \dots \rightarrow q_{k-1} \rightarrow y$, label each vertex $q_{2i}$ by $(\ell(x), i) \in A \times \Z_k$ and each vertex $q_{2i+1}$ by $(\ell(y), i + \frac{k-1}{2} ) \in A \times \Z_k$.
	
	It is straightforward to see that each element of $A \times \Z_k$ appears now as a vertex label exactly once. Moreover, the second coordinates of the edge labels rotate cyclically through the elements of $\Z_k$ in cyclic order. Since each element of $A$ appears as a first coordinate on exactly $k$ consecutive edges, this implies that each element of $A \times \Z_k$ also appears as an edge label exactly once, so we have constructed an $A \times \Z_k$-cordial labeling of $C_{nk}$.
\end{proof}

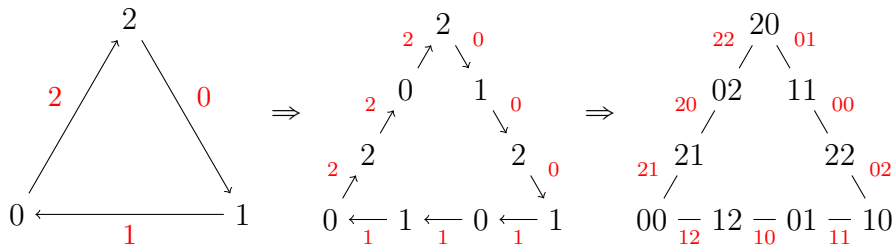
\begin{figure}[h]
    \centering
        \begin{tikzpicture}
    \tikzmath{\x = 0.333333333333; \y =1.73205080757;} 
 \node (x) at (0,0) {$0$};
 \node (y) at (3,0) {$1$};
 \node (z) at (1.5, 1.5 * \y) {$2$};
 \draw[<-] (x) --  (y) node [midway,below] {\small{$\red{1}$}};
 \draw[->] (x) -- (z) node [midway,above left] {\small{$\red{2}$}};
 \draw[->] (z) -- (y) node [midway,above right] {\small{$\red{0}$}};
     \end{tikzpicture}
     \vspace{.1in}
     \raisebox{.7in}{$\Rightarrow$}
     \vspace{.1in}
  \begin{tikzpicture}
    \tikzmath{\x = 0.333333333333; \y =1.73205080757;} 
 \node (x) at (0,0) {$0$};
 \node (q1) at (1,0) {$1$};
 \node (q2) at (2,0) {$0$};
 \node (y) at (3,0) {$1$};
 \node (p1) at (0.5, 0.5 * \y) {$2$};
 \node (p2) at (1.0, 1.0 * \y) {$0$};
 \node (z) at (1.5, 1.5 * \y) {$2$};
 \node (r1) at (2.0, 1.0 * \y) {$1$};
 \node (r2) at (2.5, 0.5 * \y) {$2$};
 \draw[<-] (x) -- (q1) node [midway,below] {\tiny{$\red{1}$}};
 \draw[<-] (q1) -- (q2) node [midway,below] {\tiny{$\red{1}$}};
 \draw[<-] (q2) -- (y) node [midway,below] {\tiny{$\red{1}$}};
 \draw[->] (x) -- (p1) node [midway,above left] {\tiny{$\red{2}$}};
 \draw[->] (p1) -- (p2) node [midway,above left] {\tiny{$\red{2}$}};
 \draw[->] (p2) -- (z) node [midway,above left] {\tiny{$\red{2}$}};
 \draw[->] (z) -- (r1) node [midway,above right] {\tiny{$\red{0}$}};
 \draw[->] (r1) -- (r2) node [midway,above right] {\tiny{$\red{0}$}};
 \draw[->] (r2) -- (y) node [midway,above right] {\tiny{$\red{0}$}};
     \end{tikzpicture}
     \vspace{.1in}
     \raisebox{.7in}{$\Rightarrow$}
     \vspace{.1in}
  \begin{tikzpicture}
    \tikzmath{\x = 0.333333333333; \y =1.73205080757;} 
 \node (x) at (0,0) {$00$};
 \node (q1) at (1,0) {$12$};
 \node (q2) at (2,0) {$01$};
 \node (y) at (3,0) {$10$};
 \node (p1) at (0.5, 0.5 * \y) {$21$};
 \node (p2) at (1.0, 1.0 * \y) {$02$};
 \node (z) at (1.5, 1.5 * \y) {$20$};
 \node (r1) at (2.0, 1.0 * \y) {$11$};
 \node (r2) at (2.5, 0.5 * \y) {$22$};
 \draw (x) -- (q1) node [midway,below] {\tiny{$\red{12}$}};
 \draw (q1) -- (q2) node [midway,below] {\tiny{$\red{10}$}};
 \draw (q2) -- (y) node [midway,below] {\tiny{$\red{11}$}};
 \draw (x) -- (p1) node [midway,above left] {\tiny{$\red{21}$}};
 \draw (p1) -- (p2) node [midway,above left] {\tiny{$\red{20}$}};
 \draw (p2) -- (z) node [midway,above left] {\tiny{$\red{22}$}};
 \draw (z) -- (r1) node [midway,above right] {\tiny{$\red{01}$}};
 \draw (r1) -- (r2) node [midway,above right] {\tiny{$\red{00}$}};
 \draw (r2) -- (y) node [midway,above right] {\tiny{$\red{02}$}};
     \end{tikzpicture}

    \caption{Example of constructing a $\Z_3\times\Z_3$-cordial labeling of $C_9$ from a $\Z_3$-cordial labeling of $C_3$, as described in the proof of Lemma~\ref{lem:puffingcycle}.}
    \label{fig:Z3xZ3}
\end{figure}

\begin{lemma}\label{lem:ordercycle}
	Suppose $|A| = n$ is odd. Then $C_n$ is $A$-cordial.
\end{lemma}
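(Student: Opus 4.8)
The plan is to prove Lemma~\ref{lem:ordercycle} by induction on the number of cyclic factors in a decomposition of $A$, using Lemma~\ref{lem:puffingcycle} for the inductive step. Since $|A|$ is odd, the structure theorem lets us write $A \cong \Z_{n_1} \times \cdots \times \Z_{n_r}$ with every $n_i$ odd, and the base case is $A$ cyclic.

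For the base case I would check directly that $C_n$ is $\Z_n$-cordial when $n$ is odd. Label the vertices of $C_n$ in cyclic order by $0, 1, \dots, n-1 \in \Z_n$; this is a vertex labeling in which each element of $\Z_n$ occurs exactly once. The $n-1$ edges joining consecutive labels $i$ and $i+1$ (for $0 \le i \le n-2$) receive the label $2i+1$, while the remaining wrap-around edge receives $(n-1)+0 = n-1$. Because $n$ is odd, $2$ is a unit modulo $n$, so $i \mapsto 2i+1$ is injective on $\Z_n$; hence the values $2i+1$ for $0\le i \le n-2$ are $n-1$ distinct elements, and a short computation identifies the one element they miss as exactly $2(n-1)+1 \equiv n-1 \pmod n$. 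Thus all $n$ edge labels are distinct as well, so both induced partitions equal $(1,1,\dots,1)$, which is almost rectangular, and the labeling is $\Z_n$-cordial. (The trivial group, $n=1$, is handled vacuously.)

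For the inductive step, if $A$ is noncyclic write $A \cong B \times \Z_k$ with $k = n_r$ odd and $B \cong \Z_{n_1}\times\cdots\times\Z_{n_{r-1}}$ of odd order $m = n/k$. By the inductive hypothesis $C_m$ is $B$-cordial, and since $k$ is odd, Lemma~\ref{lem:puffingcycle} (applied with the group $B$ in place of $A$ and with $m$ in place of $n$) shows that $C_{mk} = C_n$ is $(B\times\Z_k)$-cordial, i.e.\ $A$-cordial. This completes the induction.

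The only genuine computation is in the base case, and its content is entirely the observation that doubling is invertible modulo an odd number; I do not anticipate a real obstacle, since everything else is the bookkeeping of peeling off one cyclic factor and invoking Lemma~\ref{lem:puffingcycle}.
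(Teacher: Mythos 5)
Your proposal is correct and follows essentially the same route as the paper: induct by peeling off one odd cyclic factor and apply Lemma~\ref{lem:puffingcycle} for the inductive step. The only difference is that the paper handles the cyclic base case by citing Hovey's Theorem~9, whereas you prove it directly via the labeling $0,1,\dots,n-1$ and the invertibility of $2$ modulo odd $n$ — a valid and self-contained substitute.
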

\begin{proof}
By induction on $n$. If $A$ is a cyclic group, this is \cite[Theorem~9]{Hovey}. Otherwise, write $A$ as $B \times \Z_k$ for some odd $k$. Then $|B| = b$ is also odd, so by induction, $C_b$ is $B$-cordial. Therefore Lemma~\ref{lem:puffingcycle} gives that $C_{kb}$ is $B \times \Z_k$-cordial, i.e., $C_n$ is $A$-cordial.
\end{proof}

\begin{proof}[Proof of Theorem~\ref{thm:odd}]
Let $|A| = n$ be odd. Then by Lemma~\ref{lem:ordercycle}, the cycle $C_n$ is $A$-cordial. Take an $A$-cordial labeling of $C_n$. Deleting any edge yields an $A$-cordial labeling of the path $P_n$. Since $P_n$ is $A$-cordial, Theorem~\ref{thm:singlecheck} then implies that $A$ is $\mathbb{P}$-cordial, as desired.
\end{proof}

\section{Products of order-$2$ groups}\label{sec:order2}
 
 In this section, we assume $A = \bigtimes_{i=1}^m \Z_2$ for some $m>1$. We first prove Theorem~\ref{thm:exp2}, showing that such an $A$ cannot be $\mathbb{P}$-cordial.

\begin{proof}[Proof of Theorem~\ref{thm:exp2}]
Suppose we had an $A$-cordial labeling of $P_{2m}$. Then each $a \in A$ appears exactly once as a vertex label, and exactly one element of $A$ does not appear as an edge label. This missing element must be $\id$, since an edge labelled $\id$ can only come from adjacent vertices labeled $a$ and $-a$, but every element of $A$ is its own inverse. Therefore, the edge labels are the $2m-1$ nonidentity elements of $A$, each appearing exactly once. 

It is easy to see that the sum of all the (nonidentity) elements of $A$ is $\id$. Hence, the sum of the edge labels is $\id$. On the other hand, the edge labels come from adding adjacent vertex labels. Thus, the sum of the edge labels is the sum of the two leaf vertex labels plus twice the labels of all internal vertices. However, twice any group element is the identity, so the sum of the edge labels is just the sum of the two leaf vertex labels. Hence, the labels on the leaf vertices are each other's inverses, contradicting that they must be distinct. Therefore, $P_{2m}$ is not $A$-cordial.

Now, suppose we had an $A$-cordial labeling of $P_{2m+1}$. Then each $a\in A$ appears exactly once as an edge label. Let $e$ be the edge labeled id, and let its endpoints be $x,y$. Then the labels on $x$ and $y$ sum to id and hence are equal. Contract the edge $e$ to get a labeled $P_{2m}$. It has each $a\in A$ appearing exactly once as a vertex label and each nonidentity element appearing exactly once as an edge label, contradicting that $P_{2m}$ is not $A$-cordial.
\end{proof}

The special case $m=2$ was studied in \cite{Pechenik.Wise}. By Theorem~\ref{thm:exp2}, $P_4$ and $P_5$ are not $\Z_2 \times \Z_2$-cordial. However, \cite[Theorem~3.4]{Pechenik.Wise} shows that all other paths are $\Z_2 \times \Z_2$-cordial, so that $\Z_2 \times \Z_2$ is \emph{weakly} $\mathbb{P}$-cordial. As evidence towards Conjecture~\ref{conj:weakpath}, we here establish analogous results in the case $m=3$.

For notational convenience, let $M=\Z_2\times\Z_2\times \Z_2$. We further identify the elements of $M$ with binary strings of length $3$, added componentwise.

\begin{proposition}
	The group $M$ is weakly $\mathbb{P}$-cordial. More precisely, all paths are $M$-cordial except $P_8$ and $P_9$.
\end{proposition}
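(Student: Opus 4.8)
The plan is to split the claim into three parts: (i) $P_8$ and $P_9$ are not $M$-cordial; (ii) $P_k$ is $M$-cordial for all $k \leq 7$; and (iii) $P_k$ is $M$-cordial for all $k \geq 10$. Part (i) is immediate from Theorem~\ref{thm:exp2} with $m = 3$, since $2m = 8$ and $2m+1 = 9$. So the work is in (ii) and (iii).

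For part (ii), I would simply exhibit an $M$-cordial labeling of $P_7$ by hand — a single length-$7$ string of elements of $M$ — and then invoke Lemma~\ref{lem:extby1}-style truncation: deleting a vertex from either end of an $M$-cordial $P_k$ with $k \leq 7$ keeps the vertex partition almost rectangular (since all part sizes are $0$ or $1$) and the edge partition almost rectangular for the same reason, so $A$-cordiality of $P_7$ gives $A$-cordiality of $P_k$ for all $k \leq 7$. Actually one must be slightly careful: truncation of $P_7$ gives $P_6, P_5, \dots$, and one needs each to remain cordial, which holds because with $8$ labels available and at most $7$ vertices, every partition with parts in $\{0,1\}$ is almost rectangular. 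So a single explicit $P_7$ labeling settles (ii). I expect finding this labeling to be routine (a short computer search, or trial and error using the structure of $M$ as $\mathbb{F}_2^3$).

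For part (iii), the natural tool is Lemma~\ref{lem:gluing}: if $P_k$ and $P_{8m}$ are both $M$-cordial, then so is $P_{8m+k}$. Combined with Lemma~\ref{lem:extby1} (which lets us pass from $P_{8m+f}$ to $P_{8m+f+1}$ whenever $f \leq 4$), it would suffice to find $M$-cordial labelings of $P_{16}$ and of $P_{8 \cdot 1 + k}$ for enough small residues $k$ — concretely, $P_{10}, P_{11}, P_{12}, P_{13}, P_{14}, P_{15}$ — to cover all residues mod $8$ in a window of length $8$ above which Lemma~\ref{lem:gluing} (gluing on copies of an $M$-cordial $P_{16}$, or iterating gluing of $P_{10}$ with itself is not available since $10 \neq 8m$, so one really wants a cordial $P_{16}$) propagates cordiality to all larger paths. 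One clean route: exhibit explicit $M$-cordial labelings of $P_{16}$ and of $P_{10}$ through $P_{15}$; then for any $k \geq 16$ write $k = 8m + r$ with $r \in \{10,11,12,13,14,15\}$ and $m \geq 1$ (possible once $k \geq 18$, and handle $k \in \{16,17\}$ directly via Lemma~\ref{lem:extby1} from $P_{12}, P_{13}$ using $f=4,5$ — wait, $f \leq 4$ is required, so extend $P_{16} \to P_{17}$ needs $P_{16}$ cordial and $f=0$; fine), and glue $m$ copies of $P_{16}$... this bookkeeping needs care but is elementary. The honest approach is: produce explicit cordial labelings of $P_{10}, \dots, P_{16}$, then use Lemma~\ref{lem:gluing} with the cordial $P_{16}$ (here $n = 8$, so $P_{8m}$ means $P_{16}, P_{24}, \dots$; a cordial $P_{16}$ alone doesn't give all $P_{8m}$, but once $P_{16}$ is cordial, truncating is no help and we instead need $P_{16}$ glued to cordial $P_r$ — and to get $P_{8m}$ for larger $m$ we iterate, needing $P_{8}$ cordial, which is FALSE). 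So the right statement is: gluing requires an $A$-cordial $P_{8m}$; the smallest such is $P_{16}$. Once we have cordial $P_{16}$, Lemma~\ref{lem:gluing} gives cordial $P_{16 + k}$ for every cordial $P_k$; taking $k \in \{10,\dots,16\}$ covers $P_{26},\dots,P_{32}$, then reapplying covers $P_{42},\dots$, etc., and the gaps $P_{17},\dots,P_{25}$, $P_{33},\dots,P_{41}$ are filled by Lemma~\ref{lem:extby1} starting from the base cases.

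The main obstacle I anticipate is purely the combinatorial bookkeeping in part (iii): ensuring that the explicitly constructed "seed" labelings ($P_{10}$ through $P_{16}$, plus perhaps a couple more) together with the two propagation lemmas genuinely cover every integer $\geq 10$ with no gaps. The construction of each individual seed labeling is not deep — it is a finite search over $8^k$ labelings for small $k$, or can be done by hand exploiting the $\mathbb{F}_2^3$ structure (e.g., reusing the $P_4, P_5$-avoiding patterns from \cite{Pechenik.Wise} as building blocks) — but getting a tidy, verifiable covering argument is where the care lies. I would organize it as: first list the seed labelings in a table; then state a short lemma of the form "if $P_j$ is $M$-cordial for all $j$ with $10 \leq j \leq 16$, then $P_k$ is $M$-cordial for all $k \geq 10$," proved by strong induction using Lemma~\ref{lem:gluing} (subtract $16$) when $k \geq 26$ and noting the finite remaining range $17 \leq k \leq 25$ is handled by Lemma~\ref{lem:extby1} applied to $P_{16}, P_{17}, \ldots$ — i.e. $P_{16}\to P_{17}$ ($f=0$), and $P_{8\cdot2+f}$ for $f\le 4$ from gluing, etc.; close by combining with parts (i) and (ii).
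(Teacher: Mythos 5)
Your parts (i) and (ii) are fine --- the truncation argument in (ii) is valid (it is the same device the paper uses in the proof of Theorem~\ref{thm:singlecheck}), so a single explicit $P_7$ labeling does settle all $k\leq 7$; the paper instead gives explicit labelings of $P_6$ and $P_7$ and gets $k\leq 5$ from Lemma~\ref{lem:extby1}. Your overall strategy for (iii) --- seed labelings of $P_{10}$ through $P_{16}$, propagated by gluing copies of the cordial $P_{16}=P_{2\cdot 8}$ via Lemma~\ref{lem:gluing} and by Lemma~\ref{lem:extby1} --- is also essentially the paper's. But there is a genuine gap: the residue classes $8$ and $9$ modulo $16$ are unreachable by this machinery. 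Concretely, consider $P_{24}$ and $P_{25}$. Lemma~\ref{lem:gluing} builds $P_{mn+k}$ from a cordial $P_{mn}$ and a cordial $P_k$; with $n=8$, the only cordial multiples of $8$ available are $16, 32, \ldots$, so reaching $P_{24}$ by gluing would require a cordial $P_8$ (as one of the two pieces), which does not exist by Theorem~\ref{thm:exp2}. And Lemma~\ref{lem:extby1} only extends $P_{8k+f}$ to $P_{8k+f+1}$ when $f\leq 4$, so starting from $P_{16}$ it carries you only up to $P_{21}$; in particular it cannot produce $P_{24}$ from $P_{23}$, where $f=7$. Your sketch asserts that the range $17\leq k\leq 25$ is ``handled by Lemma~\ref{lem:extby1},'' but that lemma handles only $17$--$21$ (gluing then gives $22=16+6$ and $23=16+7$), and $24$, $25$ are stuck. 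The failure then propagates: $P_{40}=P_{16+24}$, $P_{41}=P_{16+25}$, and in general every $k\equiv 8,9\pmod{16}$ with $k\geq 24$ falls outside your covering argument.

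The fix, which is exactly what the paper does, is to add one more ad hoc seed: an explicit $M$-cordial labeling of $P_{24}$, from which $P_{25}$ follows by Lemma~\ref{lem:extby1} (here $24=8\cdot 3+0$, so $f=0$), and then $P_{16m+8}$ and $P_{16m+9}$ for all larger $m$ follow by gluing copies of $P_{16}$ onto $P_{24}$ and $P_{25}$. With that one additional explicit labeling, your argument closes up; without it, the proposal does not prove the statement.
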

\begin{proof}
	Theorem~\ref{thm:exp2} gives that $P_8$ and $P_9$ are not $M$-cordial.	
	
	 Lemma~\ref{lem:extby1} shows that $P_k$ is $M$-cordial for $k \leq 5$. 	The reader may confirm that 
	 \[
	 100-000-001-010-111-101
	 \]
	 is an $M$-cordial labeling of $P_6$ (here we record only the vertex labels), while
	 \[
	 100-000-001-010-111-101-011
	 \]
	 is an $M$-cordial labeling of $P_7$.
	 
	Additionally, 
\[000-111-001-010-110-011-011-100-101-111\]
 is an $M$-cordial labeling of $P_{10}$. Therefore, the $M$-cordiality of $P_{11}$, $P_{12}$, and $P_{13}$ follows immediately by Lemma~\ref{lem:extby1}.

 Further observe that the following are $M$-cordial labelings of $P_{14}$, $P_{15}$, and $P_{16}$:
{\small
\begin{align*}
&P_{14}: &000-111-001-010-110-011-011-100-101-111-001-010-110-100 \\
&P_{15}: &000-111-001-010-110-011-011-100-101-111-001-010-110-100-101 \\
&P_{16}: &000-000-111-001-010-110-011-011-100-101-111-001-010-110-100-101
\end{align*}}

Since $16 = 2 \cdot |M|$, if $k = 16m + j$ for some $0 \leq j < 16$ with $j \notin \{8,9 \}$, then we may construct an $M$-cordial labeling of $P_k$ by Lemma~\ref{lem:gluing}, gluing an $M$-cordial labeling of $P_j$ (as constructed above) to $m$ copies of the $M$-cordial labeling of $P_{16}$ given above. Hence, we have shown that $P_k$ is $M$-cordial for all $k$ not congruent to $8$ or $9$ modulo $16$. 

A $M$-cordial labeling of $P_{24}$ is {\tiny 
\[
000-001-101-000-111-001-010-110-011-011-100-101-111-001-010-110-100-100-011-010-111-101-110-000.
\]} 
By Lemma~\ref{lem:extby1}, it then follows that $P_{25}$ is also $M$-cordial.  Using Lemma~\ref{lem:gluing} to attach an appropriate number of labeled copies of $P_{16}$, we obtain $M$-cordial labelings of all remaining paths.
\end{proof}

\section{Other groups of even order}\label{sec:othereven}

While we know from Section~\ref{sec:odd} that all groups of odd order are $\mathbb{P}$-cordial and from Section~\ref{sec:order2} that nontrivial products of order-$2$ groups are not, the situation for other groups of even order is more mysterious. Although Conjecture~\ref{conj:path} predicts that all such groups should also be $\mathbb{P}$-cordial, we only establish limited progress in this direction, as well as verifying Conjecture~\ref{conj:path} for groups of small order.

\begin{proposition} \label{prop:smallorder}
If $A$ is an abelian group with $|A| < 24$ that is not a nontrivial direct product of groups of order $2$, then $A$ is $\mathbb{P}$-cordial.
\end{proposition}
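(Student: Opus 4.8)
The plan is to combine Theorem~\ref{thm:singlecheck} with the structure theorem for finite abelian groups. By Theorem~\ref{thm:singlecheck}, for each group $A$ in question it suffices to exhibit a single $A$-cordial labeling of the path $P_{|A|}$. So the proof reduces to a finite case analysis over all abelian groups of order less than $24$ that are not products of copies of $\Z_2$.

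First I would dispose of large swaths of cases using results already in hand. Every group of odd order is $\mathbb{P}$-cordial by Theorem~\ref{thm:odd}, which eliminates all $A$ of orders $1,3,5,7,9,11,13,15,17,19,21$. Every cyclic group is $\mathbb{P}$-cordial by Hovey's theorem (cited in the introduction), which handles $\Z_2, \Z_4, \Z_6, \dots, \Z_{22}$. What remains is the noncyclic groups of even order below $24$ that are not $\bigtimes \Z_2$: namely $\Z_2\times\Z_4$, $\Z_2\times\Z_6$, $\Z_3\times\Z_4$ (cyclic, already done, so really just $\Z_2\times\Z_4$ and $\Z_2\times\Z_6$ among order $8$ and $12$), $\Z_2\times\Z_8$, $\Z_4\times\Z_4$, $\Z_2\times\Z_2\times\Z_4$, $\Z_2\times\Z_{10}$ (order $20$, the noncyclic one is $\Z_2\times\Z_{10}$), and the order-$16$ groups $\Z_2\times\Z_8$, $\Z_4\times\Z_4$, $\Z_2\times\Z_2\times\Z_4$. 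For each such group I would simply present an explicit $A$-cordial vertex labeling of $P_{|A|}$, written as a sequence of group elements (using the binary-string / tuple notation analogous to that in Section~\ref{sec:order2}), and remark that the reader may verify that both induced partitions $\lambda^v$ and $\lambda^e$ are almost rectangular. Since $|A|$ divides $|A|$, the vertex partition will automatically be rectangular (each element once), and the edge partition will have $|A|-1$ parts equal to $1$ and one part equal to $0$, so the check in each case is just confirming that the $|A|-1$ edge sums are pairwise distinct.

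The main obstacle is purely computational: one must actually find these labelings, which for $\Z_4\times\Z_4$ and $\Z_2\times\Z_2\times\Z_4$ (order $16$) and $\Z_2\times\Z_{10}$ (order $20$) means searching for a Hamiltonian-path-like sequence in which consecutive sums hit every nonidentity element exactly once. This is a finite search, feasible by computer or by hand with some care, but it is not something to carry out in the proof text; I would just record the resulting labelings. A minor subtlety worth noting is that by Lemma~\ref{lem:extby1} and the deletion argument in Theorem~\ref{thm:singlecheck}, exhibiting an $A$-cordial $C_{|A|}$ (a cycle) would be even cleaner where available—for instance Lemma~\ref{lem:ordercycle} already gives cordial cycles in the odd case—but for the even noncyclic groups I do not expect a uniform cycle construction, so explicit path labelings are the honest route.

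I would therefore structure the proof as: (1) invoke Theorem~\ref{thm:odd} for odd orders; (2) invoke Hovey's theorem that cyclic groups are $\mathbb{P}$-cordial; (3) list the remaining noncyclic even-order groups and, for each, display an explicit $A$-cordial labeling of $P_{|A|}$, invoking Theorem~\ref{thm:singlecheck} to conclude $\mathbb{P}$-cordiality. The write-up is short; the content is in the table of labelings.
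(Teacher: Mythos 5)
Your overall strategy is exactly the paper's: dispose of odd orders via Theorem~\ref{thm:odd} and cyclic groups via Hovey's theorem, then use Theorem~\ref{thm:singlecheck} to reduce each remaining group $A$ to exhibiting a single $A$-cordial labeling of $P_{|A|}$, which the paper does with an explicit table of labelings. There are two problems with your execution. First, your enumeration of the remaining cases has a genuine gap: you omit the noncyclic abelian group of order $18$, namely $\Z_3 \times \Z_6 \cong \Z_2 \times \Z_3 \times \Z_3$. It has even order, so Theorem~\ref{thm:odd} does not apply, and it is not cyclic (the factors $\Z_3$ and $\Z_6$ are not coprime), so it must appear in your case list; the paper handles it as its case (e). Second, and less seriously, the proof as written contains no labelings at all --- you defer the entire content to a table you have not produced. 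Since the whole point of the reduction is that the proposition becomes a finite verification, the argument is not complete until those labelings are actually exhibited; your observation that the check amounts to verifying that the $|A|-1$ consecutive sums are pairwise distinct nonidentity elements is correct and is the right way to organize that verification.

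On the other side of the ledger, you correctly include $\Z_2 \times \Z_2 \times \Z_4$ among the order-$16$ groups to be checked. This group is noncyclic, of even order, and is not a product of copies of $\Z_2$, so it genuinely falls under the hypotheses of the proposition --- and the paper's own case list (a)--(f) omits it. So your enumeration is more careful than the paper's in this one respect, even as it drops $\Z_3\times\Z_6$. A fully correct proof along these (shared) lines needs seven explicit labelings, not the six the paper supplies.
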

\begin{proof} Let $n = |A|$ and assume $n < 24$. If $A$ is cyclic, then we are done by \cite[Theorem~2]{Hovey}. If $n$ is odd, we are done by Theorem~\ref{thm:odd}.

Up to group isomorphism, it remains to consider the following groups $A$:
\begin{enumerate}
    \item[(a)] $\Z_2\times \Z_4$,
    \item[(b)]  $\Z_2\times\Z_6$,
    \item[(c)] $\Z_2\times \Z_8$,
    \item[(d)] $\Z_4\times \Z_4$,
    \item[(e)] $\Z_3 \times \Z_6$, 
    \item[(f)] $\Z_2 \times \Z_{10}$.
\end{enumerate}
By Theorem~\ref{thm:singlecheck}, it then suffices to exhibit an $A$-cordial labeling of $P_n$ for each of these six groups. For compactness, we write $ij$ to denote the group element $(i,j) \in \Z_a \times \Z_b$, and record only the vertex labels. The reader may verify directly that the following are $A$-cordial labelings for the corresponding groups:
\begin{enumerate}
    \item[(a)] $ 00-12-10-01-02-03-11-13$,
    \item[(b)] $03 - 00 - 15 - 13 - 11 - 05 - 02 - 12 - 14 - 04 - 01 - 10$,
    \item[(c)] $00-14-16-07-04-15-12-13-11-02-10-06-03-05-01-17$,
    \item[(d)] $00-20-23-03-12-11-33-01-13-32-21-10-22-30-31-02$,
    \item[(e)] $00-25-21-01-02-22-12-15-04-11-24-20-10-13-05-03-23-14$,
    \item[(f)] $00-11-07-05-12-15-14-06-18-16-09-01-04-19-17-02-10-13-03-08$.
\end{enumerate}
\end{proof}

The proof of Proposition~\ref{prop:smallorder} relies on ad hoc construction of some examples of $A$-cordial labelings.  For even-order groups $A$ that are generated by two elements, the following proposition gives a uniform construction of $A$-cordial paths. It seems like this construction (in combination with the lemmas of Section~\ref{sec:general}) would be useful in a proof that such groups are at least \emph{weakly} $\mathbb{P}$-cordial, a step towards establishing Conjectures~\ref{conj:path} and~\ref{conj:weakpath}.

\begin{proposition}\label{prop:doublepath}
If $A = \mathbb{Z}_2 \times \mathbb{Z}_k$ and $n = |A|,$ then $P_{2n}$ is $A$-cordial.
    \end{proposition}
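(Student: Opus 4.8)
The plan is to give an explicit $A$-cordial labeling of $P_{2n}$ where $A = \mathbb{Z}_2 \times \mathbb{Z}_k$, $n = 2k$, so the path has $4k$ vertices and $4k-1$ edges. The cleanest route is to build the path in two ``layers'' indexed by the $\mathbb{Z}_2$-coordinate. Concretely, I would first arrange the $2k$ elements of $\{0\} \times \mathbb{Z}_k \cup \{1\} \times \mathbb{Z}_k$ in some order $v_1, v_2, \dots, v_{2k}$ for the first half of the path, and then use a reflected/shifted version of the same pattern for the second half, with a carefully chosen ``bridge'' edge between vertex $2k$ and vertex $2k+1$. Since $|A| = n$ and the path has $2n$ vertices, each group element should appear exactly twice as a vertex label and each group element exactly twice as an edge label except one element appearing once (the partition $\lambda^e$ has $2n - 1 = 4k-1$ parts totalling, so it is $(2, 2, \dots, 2, 1)$) — so I need the multiset of $4k-1$ edge sums to hit every element of $A$ twice except one element once.

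The key idea for controlling edge sums: if I take a sequence in $\mathbb{Z}_k$ that is a ``graceful''-style or complete-difference sequence — e.g. the zig-zag $0, 1, -1, 2, -2, \dots$ whose consecutive sums run through all residues — then alternating it against a second coordinate that is constant on long blocks lets me control both coordinates of the edge labels somewhat independently. I would set up the first coordinate (the $\mathbb{Z}_2$-part) to be constant $0$ on a long initial block, switch to $1$, and the second coordinate (the $\mathbb{Z}_k$-part) to follow a zig-zag so that consecutive sums in $\mathbb{Z}_k$ cycle through all of $\mathbb{Z}_k$; doing this once in each $\mathbb{Z}_2$-layer, with the layers offset, should produce every pair $(a,b) \in \mathbb{Z}_2 \times \mathbb{Z}_k$ the right number of times as an edge label. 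The parity constraints force the single ``deficient'' edge label to be a specific element, and I expect it will naturally come out as something like $(1, 0)$ or $(0,0)$ depending on the parity of $k$; I would split into cases $k$ even and $k$ odd to pin this down.

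The step I expect to be the main obstacle is making the two layers mesh at the junction: when the $\mathbb{Z}_2$-coordinate flips from $0$ to $1$, the edge sums jump, and I need the zig-zag phases in the two layers to be offset by exactly the right amount so that the $\mathbb{Z}_k$-parts of the edge labels in layer $1$ and layer $2$ together cover each residue the correct number of times, rather than double-covering some and missing others. Equivalently, I need the partial sums of the concatenated sequence to behave well across the seam. I anticipate that after writing down the candidate labeling this becomes a finite check of a handful of arithmetic identities in $\mathbb{Z}_k$ (counting how often each residue appears as a consecutive sum), parametrized by the parity of $k$, and that Lemma~\ref{lem:shift} can be invoked to normalize one endpoint to the identity and thereby reduce bookkeeping. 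Once the explicit labeling is verified to be $A$-cordial, the proposition follows immediately.
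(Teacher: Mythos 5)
Your proposal is a plan rather than a proof, and the gap it leaves open is precisely the content of the proposition. You never write down the candidate labeling, and the step you yourself flag as ``the main obstacle'' (making the two $\Z_2$-layers mesh at the seam so that the $\Z_k$-parts of the edge sums cover each residue the right number of times) is exactly the part that is deferred to an unperformed ``finite check.'' Worse, the one concrete mechanism you do name does not work as stated: the zig-zag $0,1,-1,2,-2,\dots$ has consecutive \emph{differences} running over distinct values (that is the graceful-labeling property), but its consecutive \emph{sums} alternate between $1$ and $0$ in $\Z_k$, so they do not ``run through all residues.'' Since your whole construction hinges on controlling edge sums via such a sequence, this is not a cosmetic slip --- you would need a genuinely different second-coordinate sequence, and finding one with the right sum behavior across the seam is where all the work lies. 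You also miss the free reduction for odd $k$: there $\Z_2\times\Z_k$ is cyclic, so all paths are $A$-cordial by Hovey's theorem, and only even $k$ needs a construction.

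For comparison, the paper's proof (after disposing of odd $k$ as above) takes a different and more easily verifiable route for $k=2m$: it draws $P_{2n}$ as a bipartite graph in two rows, lets the second coordinates run $0,1,\dots,k-1,0,1,\dots,k-1$ in each row, and lets the first coordinates \emph{alternate} along the path (rather than staying constant on long blocks as in your layers). This auxiliary labeling is deliberately not cordial --- every $(i,j)$ with $i+j$ odd occurs four times as an edge label and every $(i,j)$ with $i+j$ even occurs zero times --- but the error is perfectly structured, and a single swap of the first coordinates of $m$ top-row labels with $m$ bottom-row labels flips the parity on two blocks of consecutive edges and equalizes the counts, leaving only $0(k-1)$ deficient by one. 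The virtue of that design is that the correctness check reduces to reading off one explicit sequence of edge labels; your block-constant design pushes all the difficulty into the seam arithmetic that the proposal does not carry out.
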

\begin{proof}
If $k$ is odd, then $A$ is a cyclic group, so all paths are $A$-cordial by \cite[Theorem~2]{Hovey}.

Hence assume $k=2m$ is even. We will exhibit an explicit $A$-cordial labeling of $P_{2n}$ by first building an auxiliary labeling of $P_{2n}$ that is not $A$-cordial and then slightly modifying this auxiliary labeling.
To start, draw $P_{2n}$ in two rows as a bipartite graph. The second coordinates of the top-row vertices will be $0,1,\ldots,k-1,0,1,\ldots,k-1$ from left to right. The second coordinates of the bottom-row vertices will also be $0,1,\ldots,k-1,0,1,\ldots,k-1$ from left to right. 
The first coordinates of the top-row alternate between $0$ and $1$, starting with $1$, while those of the bottom row alternate starting with $0$. 

It is clear that this labeling uses each element of $A$ as a vertex label exactly twice. However, the labeling constructed so far is not $A$-cordial.  The second coordinates of the edge labels cycle through $0,1,\ldots,k-1,0,1,\ldots,k-1$ in order four times, while the first coordinates of the edge labels alternate between $0$ and $1$. Since $k$ is even, this means that each $(i,j) \in \Z_2 \times \Z_k$ with $i+j$ odd appears as a label on four edges, while each $(i,j) \in \Z_2 \times \Z_k$ with $i+j$ even does not appear as an edge label. 

The final step is to modify this labeling by swapping (the first coordinates of) the last $m$ labels of the top row with (the first coordinates of) the second batch of $m$ labels from the bottom row. This swapping is illustrated in Figure~\ref{fig:Z2xZ4} for the example $A = \Z_2 \times \Z_4$. 

\begin{figure}[h]
    \centering
    \begin{tikzpicture}
    \node (1) at (0,0) {$10$};
     \node (3) at (2,0) {$01$};
      \node (5) at (4,0) {$12$};
          \node (7) at (6,0) {$03$};
     \node (9) at (8,0) {$10$};
      \node (11) at (10,0) {$01$};
      \node (13) at (12,0) {\fbox{$12$}};
      \node (15) at (14,0) {\fbox{$03$}};
          \node (2) at (1,-1.5) {$00$};
     \node (4) at (3,-1.5) {$11$};
      \node (6) at (5,-1.5) {\fbox{$02$}};
          \node (8) at (7,-1.5) {\fbox{$13$}};
     \node (10) at (9,-1.5) {$00$};
      \node (12) at (11,-1.5) {$11$};
       \node (14) at (13,-1.5) {$02$};
        \node (16) at (15,-1.5) {$13$};
         \draw (1) -- (2) node [midway,below left] {\tiny{$\red{10}$}};
         \draw (2) -- (3) node [midway,above left] {\tiny{$\red{01}$}};
         \draw (3) -- (4) node [midway,below left] {\tiny{$\red{12}$}};
         \draw (4) -- (5) node [midway,above left] {\tiny{$\red{03}$}};
         \draw (5) -- (6) node [midway,below left] {\tiny{$\red{10}$}};
         \draw (6) -- (7) node [midway,above left] {\tiny{$\red{01}$}};
         \draw (7) -- (8) node [midway,below left] {\tiny{$\red{12}$}};
         \draw (8) -- (9) node [midway,above left] {\tiny{$\red{03}$}};
         \draw (9) -- (10) node [midway,below left] {\tiny{$\red{10}$}};
         \draw (10) -- (11) node [midway,above left] {\tiny{$\red{01}$}};
         \draw (11) -- (12) node [midway,below left] {\tiny{$\red{12}$}};
         \draw (12) -- (13) node [midway,above left] {\tiny{$\red{03}$}};
         \draw (13) -- (14) node [midway,below left] {\tiny{$\red{10}$}};
         \draw (14) -- (15) node [midway,above left] {\tiny{$\red{01}$}};
         \draw (15) -- (16) node [midway,below left] {\tiny{$\red{12}$}};
    \end{tikzpicture}\\
    \vspace{.1in}
 $\bigg\downarrow$\\
    \vspace{.1in}
      \begin{tikzpicture}
    \node (1) at (0,0) {$10$};
     \node (3) at (2,0) {$01$};
      \node (5) at (4,0) {$12$};
          \node (7) at (6,0) {$03$};
     \node (9) at (8,0) {$10$};
      \node (11) at (10,0) {$01$};
      \node (13) at (12,0) {\fbox{$02$}};
      \node (15) at (14,0) {\fbox{$13$}};
          \node (2) at (1,-1.5) {$00$};
     \node (4) at (3,-1.5) {$11$};
      \node (6) at (5,-1.5) {\fbox{$12$}};
          \node (8) at (7,-1.5) {\fbox{$03$}};
     \node (10) at (9,-1.5) {$00$};
      \node (12) at (11,-1.5) {$11$};
       \node (14) at (13,-1.5) {$02$};
        \node (16) at (15,-1.5) {$13$};
         \draw (1) -- (2) node [midway,below left] {$\red{10}$};
         \draw (2) -- (3) node [midway,above left] {$\red{01}$};
         \draw (3) -- (4) node [midway,below left] {\tiny{$\red{12}$}};
         \draw (4) -- (5) node [midway,above left] {\tiny{$\red{03}$}};
         \draw (5) -- (6) node [midway,below left] {\fbox{\tiny{$\red{00}$}}};
         \draw (6) -- (7) node [midway,above left] {\fbox{\tiny{$\red{11}$}}};
         \draw (7) -- (8) node [midway,below left] {\fbox{\tiny{$\red{02}$}}};
         \draw (8) -- (9) node [midway,above left] {\fbox{\tiny{$\red{13}$}}};
         \draw (9) -- (10) node [midway,below left] {\tiny{$\red{10}$}};
         \draw (10) -- (11) node [midway,above left] {\tiny{$\red{01}$}};
         \draw (11) -- (12) node [midway,below left] {\tiny{$\red{12}$}};
         \draw (12) -- (13) node [midway,above left] {\fbox{\tiny{$\red{13}$}}};
         \draw (13) -- (14) node [midway,below left] {\fbox{\tiny{$\red{00}$}}};
         \draw (14) -- (15) node [midway,above left] {\fbox{\tiny{$\red{11}$}}};
         \draw (15) -- (16) node [midway,below left] {\fbox{\tiny{$\red{02}$}}};
    \end{tikzpicture}
    \caption{An example of the labeling process from the proof of Proposition~\ref{prop:doublepath}, as illustrated for $A=\Z_2\times\Z_4$. (Note that this example is only for illustrative purposes, as we already know from Theorem~\ref{prop:smallorder} that this $A$ is $\mathbb{P}$-cordial.) The upper graph shows the auxiliary labeling with boxes around the vertex labels to be swapped. The lower graph shows the $A$-cordial labeling obtained by swapping, with boxes around the (vertex and edge) labels that have changed.}
    \label{fig:Z2xZ4}
\end{figure}
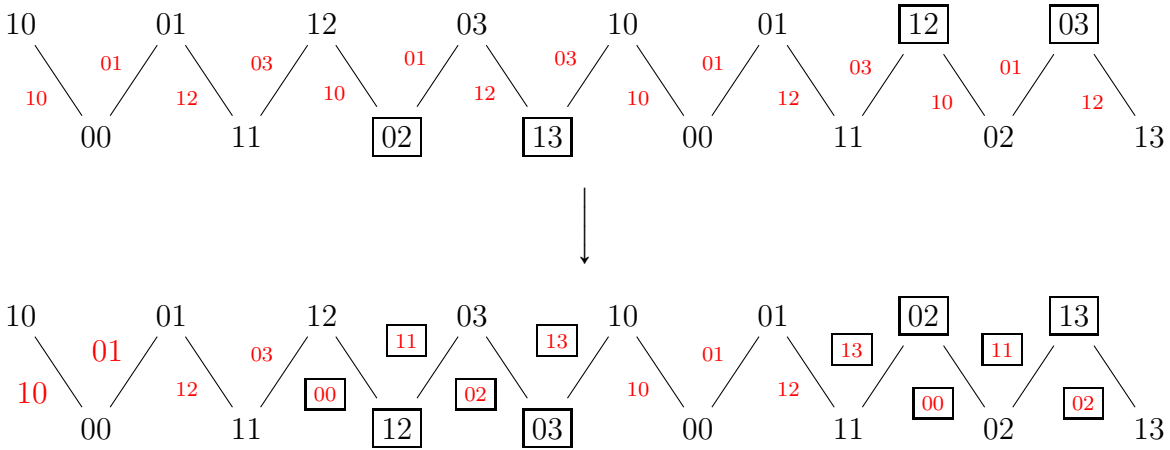

It is clear that this modified labeling also uses each element of $A$ as a vertex label exactly twice. The second coordinates of the edge labels still cycle through $0,1,\ldots,k-1,0,1,\ldots,k-1$ in order four times. However, the pattern of first coordinates of edge labels is now more complicated. 

Edge labels appear in the following sequence from left to right (we include line breaks that we believe help clarify the structure of the sequence):
  \begin{align*}
   &10, 01, 12, 03, \ldots, 1(k-2), 0(k-1),\\
 &00, 11, 02, 13, \ldots, 0(k-2), 1(k-1),\\
 &10, 01, 12, 03, \ldots,1(k-2), \\
 &1(k-1),\\
  &00, 11, 02, 13, \ldots, 0(k-2).
  \end{align*}

 In particular, the first $k$ labels are exactly the same as the second batch of $k$ labels and in the same order, except that all the first coordinates have been switched. Hence, the first $n$ edge labels consist of each element of $A$ exactly once. Similarly, after the first $n$ edge labels, the next $k-1$ labels are the same as the last $k-1$ edge labels and in the same order, except that all the first coordinates have been switched.
 Hence we see that, in total, each label appears exactly twice as an edge label, except for the label $0(k-1)$, which appears exactly once. Thus, this is an $A$-cordial labeling of $P_{2n}$.
\end{proof}

\section*{Acknowledgements}
	OP was supported by a Mathematical Sciences Postdoctoral Research Fellowship (\#1703696) from the National Science Foundation.


\bibliographystyle{alpha}
\bibliography{cordial}

\end{document}